\newtheorem{theorem}{Theorem}[section]
\newtheorem{proposition}[theorem]{Proposition}
\newtheorem{lemma}[theorem]{Lemma}
\newtheorem{example}[theorem]{Example}
\DeclareMathOperator{\Spr}{Spr}
\DeclareMathOperator{\Dis}{Dis}
\newenvironment{breakablealgorithm}
  {
   \begin{center}
     \refstepcounter{algorithm}
     \hrule height.8pt depth0pt \kern2pt
     \renewcommand{\caption}[2][\relax]{
       {\raggedright\textbf{\ALG@name~\thealgorithm} ##2\par}%
       \ifx\relax##1\relax 
         \addcontentsline{loa}{algorithm}{\protect\numberline{\thealgorithm}##2}%
       \else 
         \addcontentsline{loa}{algorithm}{\protect\numberline{\thealgorithm}##1}%
       \fi
       \kern2pt\hrule\kern2pt
     }
  }{
     \kern2pt\hrule\relax
   \end{center}
  }
\journal{ }
\begin{document}

\begin{frontmatter}



\title{Rational Solutions to the First Order Difference Equations in the Bivariate Difference Field}
\author[1]{Qing-Hu Hou}\ead{qh\_hou@tju.edu.cn}

\author[2]{Yarong Wei\corref{cor1}}\ead{yarongwei@email.tjut.edu.cn}
\address[1]{School of Mathematics, Tianjin University, Tianjin, 300350, China}
\address[2]{School of Science, Tianjin University of Technology, Tianjin 300384, PR China}
\cortext[cor1]{Corresponding author}

\begin{abstract}
Inspired by Karr's algorithm, we consider the summations involving a sequence satisfying a recurrence of order two. The structure of such summations provides an algebraic framework for solving
the difference equations of form $a\sigma(g)+bg=f$ in the bivariate difference field $(\mathbb{F}(\alpha, \beta), \sigma)$, where $a, b,f\in\mathbb{F}(\alpha,\beta)\setminus\{0\}$ are known binary functions of $\alpha$, $\beta$, and $\alpha$, $\beta$ are two algebraically independent transcendental elements, $\sigma$ is a transformation that satisfies
$\sigma(\alpha)=\beta$, $\sigma(\beta)=u\alpha+v\beta$, where $u,v\neq 0\in\mathbb{F}$. Based on it, we then describe algorithms for finding the universal denominator for those equations in the bivariate difference field under certain assumptions. This reduces the general problem of finding the rational solutions of such equations to the problem of finding the polynomial solutions of such equations.
\end{abstract}



\begin{keyword}
Dispersion \sep Rational Solutions \sep Bivariate Difference Field



\end{keyword}

\end{frontmatter}


\section{Introduction}
In dealing with summations involving harmonic numbers, Karr \citep{Karr-1981} considered the $\Pi\Sigma$-extension difference field $(\mathbb{F}(t), \sigma)$ of the difference field $(\mathbb{F},\sigma)$ with
\begin{align}\label{eq-1-0}
\sigma(t)=a\cdot t+b
\end{align}
for some $a\in\mathbb{F}\setminus\{0\},b\in\mathbb{F}$. A sequence satisfying a recurrence of order one can be easily described by this model.
This consideration could not only deal with hypergeometric terms, like Gosper's algorithm \citep{Gosper-1978}, and $q$-hypergeometric terms, like \citep{Paule-1997}, but also with terms including the harmonic numbers.

There were several extensions of Karr's algorithm. Bronstein \citep{Bronstein-2000} described how to compute the solutions of generalized difference equations  from the point of view of differential fields. Abramov, Bronstein, Petkov\v{s}ek and Schneider \citep{Abramov-2021} gave a complete algorithm to compute all hypergeometric solutions of homogeneous linear difference equations and rational solutions of parameterized linear difference equations in the $\Pi\Sigma^*$-fields.

We focus on sequences satisfying recurrence of higher order. Take, for example, the Fibonacci sequence $F_n$ which satisfies
\[ F_{n+2} = F_{n+1} + F_n, \quad F_0=F_1=1. \]
We regard the summations involving $F_n$ lie in the difference field $(\mathbb{R}(F_{n}, F_{n+1}), \sigma)$ with
\[
\sigma|_{\mathbb{R}} = {\rm id}, \quad  \sigma(F_n)=F_{n+1},  \quad\text{and}\quad \sigma(F_{n+1})=F_n+F_{n+1}.
\]
With this consideration, one can easily find the right hand side of the following identity,
\[\sum_{n=0}^mF_{n+1}^2=F_{m+1}F_{m+2},\]
which is a result of Lucas, published by Koshy \citep[Eq(5.5)]{Koshy-2001}.

Another example is the Lucas sequences. The summations involving it could be considered in the difference field $(\mathbb{R}(\alpha, \beta), \sigma)$ with integer parameters $P, Q$,
\[
\sigma|_{\mathbb{R}} = {\rm id}, \quad  \sigma(\alpha)=\beta,  \quad\text{and}\quad \sigma(\beta)=-Q\alpha+P\beta.
\]
There are two kinds of Lucas sequences: the first kind $U_n(P,Q)$ and the second kind $V_n(P,Q)$  with the same recurrence relation \[a_{n+2}=P\cdot a_{n+1}-Q\cdot a_{n}\]
but difference initial values
\[
U_{0}(P,Q)=0,\ \  U_{1}(P,Q)=1,
\]
and
\[
V_{0}(P,Q)=2,\ \ V_{1}(P,Q)=P.
\]
Note that $U_n(1,-1)$ is the $n$-th Fibonacci number $F_n$, $V_n(1,-1)$ is the $n$-th Lucas number, $U_n(2,-1)$ is the $n$-th Pell number and $V_n(2,-1)$ is the $n$-th Pell-Lucas number.

This consideration leads to a new approach to the summations involving a sequence satisfying a recurrence of order two.
Let $(\mathbb{F},\sigma)$ be a difference field, i.e., $\sigma$ is an automorphism of $\mathbb{F}$. We define the \emph{bivariate difference field} extension $(\mathbb{F}(\alpha, \beta), \sigma)$ of $(\mathbb{F},\sigma)$ to be the field with
\begin{enumerate}
\item $\alpha,\beta$ being algebraically independent transcendental elements over $\mathbb{F}$.
\item $\sigma$ being an automorphism of $\mathbb{F}(\alpha,\beta)$ and
\begin{align}\label{eq-sigma}
\sigma(\alpha)=\beta, \ \ \ \sigma(\beta)=u\alpha+v\beta,
\end{align}
where $v\in\mathbb{F}, u\in\mathbb{F}\setminus\{0\}$.
 \end{enumerate}
Motivated by the Karr's algorithm, we focus on the difference equation of order one
\begin{equation} \label{eq-dif}
 a(\alpha, \beta)\sigma(g(\alpha, \beta))+b(\alpha, \beta)g(\alpha, \beta)=f(\alpha, \beta),
\end{equation}
where $a(\alpha, \beta), b(\alpha, \beta), f(\alpha, \beta)$ are given non-zero rational functions of $\alpha,\beta$.

For the bivariate difference field extension $(\mathbb{F}(\alpha,\beta),\sigma)$,
we will show there are two variables $\overline{\alpha},\overline{\beta}\in\mathbb{F}(\alpha,\beta)$ such that $\mathbb{F}(\alpha,\beta)=\mathbb{F}(\overline{\alpha})(\overline{\beta})$ under certain conditions, where $\mathbb{F}(\overline{\alpha})(\overline{\beta})$ is obtained by adding $\overline {\alpha}$ and $\overline{\beta} $ step by step through $\Pi$-extension of the field $\mathbb{F}$. But in this case, the field $\mathbb{F}$ needs to be algebraically closed, for instant, we need to introduce $\sqrt{5}$ for summation involving Fibonacci number $F_n$.
However we can obtain the properties of $\mathbb{F}(\alpha,\beta)$ according to its properties in the algebraic closure $\overline{\mathbb{F}}(\alpha,\beta)$, which could get directly and avoid introducing the algebraic closed field.

Base on this consideration and the approach, we will give an algorithm to find rational solutions of the difference equation (\ref{eq-dif}) in the difference field $(\mathbb{F}(\alpha,\beta),\sigma)$ under certain assumptions. Recall that a polynomial $p(\alpha,\beta)$ is called a \emph{universal denominator} of (\ref{eq-dif}) if for every rational solution $y(\alpha,\beta)$ to (\ref{eq-dif}), $p(\alpha,\beta)y(\alpha,\beta)$ is a polynomial.
Actually, we only need to find a universal denominator of (\ref{eq-dif}), then it is easy to find a rational solution of (\ref{eq-dif}) by following the Guan and Hou's algorithm \citep{Guan-2021} to find the polynomial solution.

The outline of the article is as follows. In Section \ref{sec-2}, we introduce the basic properties of the bivariate difference filed and some results which would be used to compute the universal denominator of (\ref{eq-dif}), also we show the convenience of this consideration.
Then we give an algorithm to compute a rational solution for the trivial case $a,b\in\mathbb{F}$ under certain assumptions in Section \ref{sec-3}, like Gosper's algorithm. In particular, we explain strictly that why the dispersion between polynomials in the Gosper's representation is finite with respect to the difference field $(\mathbb{F}(\alpha,\beta),\sigma)$, and show that the given upper bound on the degree for the universal denominator of the Gosper's function is sufficient to solve many functions.
Finally, in Section \ref{sec-4}, we present an algorithm on how to find a universal denominator for the nontrivial case $a,b\in\mathbb{F}(\alpha,\beta)$ under certain assumptions. Moreover, we get the finite part of the universal denominator and show the infinite part of the universal denominator based on the given upper bound.
\section{Bivariate Difference Field}\label{sec-2}
In this section, we give some notations and basic results about the bivariate difference field $(\mathbb{F}(\alpha, \beta), \sigma)$.

Let $(\mathbb{F}(\alpha, \beta), \sigma)$ be a bivariate difference field extension of the difference field $(\mathbb{F},\sigma)$ defined by (\ref{eq-sigma}), and let $\mathbb{F}(\alpha, \beta)^*=\mathbb{F}(\alpha, \beta)\setminus\{0\}$. Denote $\mathbb{F}[\alpha, \beta]$ the set of polynomials of $\alpha, \beta$. For a polynomial $f \in \mathbb{F}[\alpha, \beta]$, let $\deg f$ denote the \emph{total degree} in variables $\alpha, \beta$. Moreover, we use the total degree lexicographical ordering so that $\alpha \succ \beta$. We could have the following result immediately.

\begin{theorem}\label{thm-2-0}
Let $(\mathbb{F}(\alpha, \beta), \sigma)$ be a bivariate difference field extension of the difference field $(\mathbb{F},\sigma)$. Then, we have

$(i)$ $\sigma$ preserves the total degree of a polynomial.

$(ii)$ $\sigma$ is an isomorphism from $\mathbb{F}(\alpha,\beta)$ to itself.

$(iii)$ If $t\in\mathbb{F}[\alpha,\beta]\setminus\mathbb{F}$ is an irreducible polynomial, then $\sigma^m t\in\mathbb{F}[\alpha,\beta]\setminus\mathbb{F}$ is also an irreducible polynomial for any $m\in \mathbb{N}$.
\end{theorem}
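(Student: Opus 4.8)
The plan is to establish each part by a direct computation using the defining relations $\sigma(\alpha)=\beta$ and $\sigma(\beta)=u\alpha+v\beta$, treating $\sigma$ first on the polynomial ring $\mathbb{F}[\alpha,\beta]$ and then extending to the field. For part $(i)$, I would argue that the substitution $\alpha\mapsto\beta$, $\beta\mapsto u\alpha+v\beta$ is a linear (homogeneous of degree one) change of variables on the span $\mathbb{F}\alpha\oplus\mathbb{F}\beta$, hence for a monomial $\alpha^i\beta^j$ we get $\sigma(\alpha^i\beta^j)=\beta^i(u\alpha+v\beta)^j$, which is homogeneous of total degree $i+j$. Summing over the homogeneous components of a general $f$, $\sigma$ maps the degree-$d$ part into the degree-$d$ part, so $\deg\sigma(f)\le\deg f$; the key point is that this map is invertible on each homogeneous component, so in fact $\deg\sigma(f)=\deg f$. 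Invertibility of the linear map $\begin{pmatrix}0&u\\1&v\end{pmatrix}$ on $\mathbb{F}\alpha\oplus\mathbb{F}\beta$ follows from $\det=-u\neq0$, which uses the hypothesis $u\neq0$; concretely $\sigma^{-1}(\alpha)=(\beta-v\alpha)/u$ after relabeling, giving the inverse substitution.

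For part $(ii)$, I would first note that $\sigma$ is by definition a ring endomorphism of $\mathbb{F}[\alpha,\beta]$ fixing $\mathbb{F}$ (or acting as the given automorphism on $\mathbb{F}$, which is injective there), and that because $\alpha,\beta$ are algebraically independent it is well-defined and extends uniquely to the fraction field $\mathbb{F}(\alpha,\beta)$. Surjectivity onto $\mathbb{F}(\alpha,\beta)$ amounts to checking that $\alpha$ and $\beta$ lie in the image: from $\sigma(\alpha)=\beta$ we get $\beta\in\mathrm{im}\,\sigma$, and from $\sigma(\beta)=u\alpha+v\beta$ together with $u\neq0$ we solve $\alpha=(\sigma(\beta)-v\sigma(\alpha))/u=\sigma\bigl((\beta-v\alpha)/u\bigr)$, so $\alpha\in\mathrm{im}\,\sigma$ as well; hence $\sigma$ is onto. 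Injectivity is automatic since $\sigma$ is a nonzero ring homomorphism from a field. Thus $\sigma$ is an automorphism (isomorphism onto itself) of $\mathbb{F}(\alpha,\beta)$.

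For part $(iii)$, the idea is that an invertible linear substitution of variables is a ring automorphism of $\mathbb{F}[\alpha,\beta]$, and ring automorphisms send irreducible elements to irreducible elements; iterating gives the statement for every $m\in\mathbb{N}$. More precisely, by part $(ii)$ applied to the polynomial ring, $\sigma$ restricts to an automorphism of $\mathbb{F}[\alpha,\beta]$ (it maps polynomials to polynomials by the degree-preservation in $(i)$, and its inverse does too), so if $t$ is irreducible then $\sigma(t)$ cannot factor nontrivially — a factorization $\sigma(t)=pq$ would pull back under $\sigma^{-1}$ to a nontrivial factorization of $t$ — and $\sigma(t)\notin\mathbb{F}$ because $\sigma$ preserves degree and fixes $\mathbb{F}$ setwise. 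Induction on $m$ then finishes it. I expect no serious obstacle here; the only care needed is to make sure that "preserves total degree" is upgraded to "restricts to an automorphism of the polynomial ring," which is exactly where the invertibility of the coefficient matrix (i.e. $u\neq0$) is used, and to keep the unit/non-unit bookkeeping straight so that irreducibility — not merely non-factorization — is what transfers.
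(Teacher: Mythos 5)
Your proposal is correct, and in fact the paper offers no proof at all of this theorem (it is stated as immediate), so your argument simply supplies the standard verification the paper presupposes: degree preservation via the invertible homogeneous linear substitution (determinant $-u\neq 0$), well-definedness and surjectivity on the fraction field, and transfer of irreducibility along the induced automorphism of $\mathbb{F}[\alpha,\beta]$. One cosmetic slip: since $\sigma|_{\mathbb{F}}$ need not be the identity, the preimage of $\alpha$ should be written as $\sigma\bigl((\beta-v'\alpha)/u'\bigr)$ with $\sigma(u')=u$, $\sigma(v')=v$ (such $u',v'\in\mathbb{F}$ exist because $\sigma$ is an automorphism of $\mathbb{F}$), rather than $\sigma\bigl((\beta-v\alpha)/u\bigr)$; the rest of the argument is unaffected.
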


We will use the following notations that are revelent in Karr's work \citep{Karr-1985} and that have been refined and explored further in Brostein \citep{Bronstein-2000}.

An element $a\in(\mathbb{F}(\alpha, \beta), \sigma)$ is called \emph{invariant} if $\sigma a=a$, \emph{periodic} if $\sigma^na=a$ for some integer $n>0$, \emph{semi-invariant} if $\sigma a=ua$ for $u\in\mathbb{F[\alpha,\beta]}^*$ and \emph{semi-periodic} if $\sigma^na=ua$ for $u\in\mathbb{F[\alpha,\beta]}^*$ and some integer $n>0$.
The set
\[\mbox{Const$_{\sigma}(\mathbb{F(\alpha,\beta)})=\{a\in\mathbb{F(\alpha,\beta)}$ such that $\sigma a=a$\}}
\]
is called the \emph{constant subfield} of $\mathbb{F}(\alpha,\beta)$ with respect to $\sigma$, and we write $\mathbb{F}(\alpha,\beta)^{\sigma}$ for the semi-invariants of $\mathbb{F}(\alpha,\beta)$, $\mathbb{F}(\alpha,\beta)^{\sigma^*}=\bigcup_{n>0}\mathbb{F}(\alpha,\beta)^{\sigma^n}$
for its semi-periodic elements.

By the definition we have Const$_{\sigma}(\mathbb{F(\alpha,\beta)})\subseteq\mathbb{F}(\alpha,\beta)^{\sigma}\subseteq\mathbb{F}(\alpha,\beta)^{\sigma^*}$.
Note that since $\sigma$ keeps the total degree of a polynomial unchanged, the definitions of \emph{semi-invariant} and \emph{semi-periodic} are equivalent to: $\sigma a=ua$ for $u\in\mathbb{F}\setminus\{0\}$ and $\sigma^na=ua$ for $u\in\mathbb{F}\setminus\{0\}$ and some integer $n>0$, respectively.
Now, we show a sufficient condition for that $p\in\mathbb{F}(\alpha,\beta)^{\sigma^*}$ if and only if $p=\mathbb{F}(\alpha,\beta)^\sigma$, where $p$ is a homogenous polynomial.
\begin{theorem}\label{thm-2-1}
Let $(\mathbb{F}(\alpha, \beta), \sigma)$ be a bivariate difference field extension of the difference field $(\mathbb{F}, \sigma)$ with parameters $u,v$, and let $A$ be the matrix
\begin{align*}
  A =
  \left(
    \begin{array}{cc}
      0 & u \\
      1 & v \\
    \end{array}
  \right).
  \end{align*}
Suppose that $\sigma|_\mathbb{F}={\rm id}$ and $A$ has two different eigenvalues $\lambda_1,\lambda_2 \in {\mathbb{F}}$. If $\lambda_1/\lambda_2$ is not a root of unity, then for two relatively prime homogeneous polynomials $p,q\in\mathbb{F}[\alpha,\beta]$, we have
\[
p  \in \mathbb{F}(\alpha,\beta)^{\sigma^*} \iff p \in  \mathbb{F}(\alpha,\beta)^\sigma
\]
and
\[
p/q  \in \mathbb{F}(\alpha,\beta)^{\sigma^*} \iff p/q \in  \mathbb{F}(\alpha,\beta)^\sigma
\]
\end{theorem}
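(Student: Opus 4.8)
The plan is to diagonalize the action of $\sigma$ on the homogeneous part of $\mathbb{F}[\alpha,\beta]$ and reduce the semi-periodicity question to a statement about monomials in the eigencoordinates. First I would pass to the eigenbasis of $A$: since $A$ has distinct eigenvalues $\lambda_1,\lambda_2\in\mathbb{F}$, there is a linear change of coordinates $\overline{\alpha},\overline{\beta}\in\mathbb{F}\alpha+\mathbb{F}\beta$ (the eigenvectors) with $\sigma(\overline{\alpha})=\lambda_1\overline{\alpha}$ and $\sigma(\overline{\beta})=\lambda_2\overline{\beta}$; this is exactly the $\Pi$-extension picture mentioned in the introduction, and it is legitimate here because $\lambda_1,\lambda_2\in\mathbb{F}$ so no algebraic extension is needed. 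Because $\overline{\alpha},\overline{\beta}$ are still algebraically independent over $\mathbb{F}$, a homogeneous polynomial $p$ of degree $d$ can be written uniquely as $p=\sum_{i=0}^{d}c_i\,\overline{\alpha}^{\,i}\overline{\beta}^{\,d-i}$ with $c_i\in\mathbb{F}$, and then $\sigma^n(p)=\sum_i c_i\,\lambda_1^{ni}\lambda_2^{n(d-i)}\,\overline{\alpha}^{\,i}\overline{\beta}^{\,d-i}$.

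Next I would unwind what $p\in\mathbb{F}(\alpha,\beta)^{\sigma^*}$ means: there is $n>0$ and $w\in\mathbb{F}\setminus\{0\}$ with $\sigma^n(p)=w\,p$ (using the reduction, noted just before the theorem, that the multiplier may be taken in $\mathbb{F}^*$ since $\sigma$ preserves total degree). Comparing coefficients of each monomial $\overline{\alpha}^{\,i}\overline{\beta}^{\,d-i}$ for which $c_i\neq 0$ gives $\lambda_1^{ni}\lambda_2^{n(d-i)}=w$. If $p$ has at least two nonzero coefficients, say at indices $i\neq j$, subtracting the two relations yields $\lambda_1^{n(i-j)}=\lambda_2^{n(i-j)}$, i.e. $(\lambda_1/\lambda_2)^{n(i-j)}=1$, contradicting the hypothesis that $\lambda_1/\lambda_2$ is not a root of unity. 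Hence $p$ is a single term $c_i\,\overline{\alpha}^{\,i}\overline{\beta}^{\,d-i}$, and then $\sigma^n(p)=\lambda_1^{ni}\lambda_2^{n(d-i)}p$ already for $n=1$, so $p\in\mathbb{F}(\alpha,\beta)^{\sigma}$; the reverse inclusion $\mathbb{F}(\alpha,\beta)^{\sigma}\subseteq\mathbb{F}(\alpha,\beta)^{\sigma^*}$ is trivial. This proves the first equivalence.

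For the rational case $p/q$ with $p,q$ relatively prime homogeneous polynomials, I would argue that $\sigma^n(p/q)=p/q$ (resp.\ $=w\,p/q$) forces $\sigma^n(p)/p=\sigma^n(q)/q$ as elements of $\mathbb{F}(\alpha,\beta)^*$: by Theorem~\ref{thm-2-0}(iii) and unique factorization, $\sigma^n$ permutes irreducible factors, and since $\gcd(p,q)=1$ the factor $q$ on the left must cancel, giving that $q\mid\sigma^n(q)$ up to a field constant and likewise for $p$; comparing degrees then yields $\sigma^n(p)=w_1 p$ and $\sigma^n(q)=w_2 q$ for some $w_1,w_2\in\mathbb{F}^*$. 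Thus $p,q\in\mathbb{F}(\alpha,\beta)^{\sigma^*}$ individually, and by the first part $p,q\in\mathbb{F}(\alpha,\beta)^{\sigma}$, whence $p/q\in\mathbb{F}(\alpha,\beta)^{\sigma}$; again the converse is immediate. The main obstacle I anticipate is the bookkeeping in this last reduction — carefully justifying that in a semi-periodicity relation for a reduced fraction the numerator and denominator must be semi-periodic separately (the coprimality plus the irreducible-factor permutation from Theorem~\ref{thm-2-0}(iii) is the crux) — rather than any deep new idea; the eigenvalue/root-of-unity argument itself is short once the eigencoordinates are in place.
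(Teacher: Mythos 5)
Your proposal is correct and follows essentially the same route as the paper: pass to the eigencoordinates $\overline{\alpha},\overline{\beta}$ of $A$, use that the values $\lambda_1^{i}\lambda_2^{d-i}$ (and their $n$-th powers) are pairwise distinct because $\lambda_1/\lambda_2$ is not a root of unity, and conclude that a semi-periodic homogeneous polynomial is a single monomial in $\overline{\alpha},\overline{\beta}$, hence already semi-invariant. The only difference is presentational — you compare coefficients directly and spell out the coprimality reduction for $p/q$, whereas the paper phrases the same computation as the statement that every eigenvector of $\sigma^k$ on the degree-$d$ homogeneous component is an eigenvector of $\sigma$ — so no substantive gap.
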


\begin{proof}
Notice that when $p,q \in \mathbb{F}[\alpha,\beta]$ are relatively prime,
\[
p/q \in \mathbb{F}(\alpha,\beta)^{\sigma^*} \iff p,q \in  \mathbb{F}(\alpha,\beta)^{\sigma^*}.
\]
Therefore, we need only prove
\[
p  \in \mathbb{F}(\alpha,\beta)^{\sigma^*} \implies p \in  \mathbb{F}(\alpha,\beta)^\sigma
\]
for homogeneous polynomials $p$.

Let $n$ be a nonnegative integer and
\[
V = \left\{ \left. \sum_{i=0}^n c_i \alpha^i \beta^{n-i} \,\right|\, c_i \in \mathbb{F} \right\}.
\]
For any positive integer $k$, the map $\sigma^k \colon p \mapsto \sigma^k(p)$ form a linear transformation of the $\mathbb{F}$-vector space  $V$. In terms of linear transformation, the assertion is equivalent to the claim that  all eigenvectors of $\sigma^k$ are eigenvectors of $\sigma$.

Let $X_1,X_2$ be the eigenvectors corresponding to the eigenvalues $\lambda_1,\lambda_2$, respectively. Let
\[
\overline{\alpha} = (\alpha, \beta) X_1, \quad \overline{\beta} = (\alpha, \beta) X_2.
\]
Then we have
\[
\sigma(\overline{\alpha}) = \sigma((\alpha, \beta) X_1) = (\alpha, \beta) AX_1 = \lambda_1  \overline{\alpha}
\quad \mbox{and} \quad  \sigma(\overline{\beta}) = \lambda_2  \overline{\beta}.
\]
Moreover, for any nonnegative integer $n$, we have
\[
\sigma(\overline{\alpha}^i \overline{\beta}^{n-i}) = \lambda_1^i \lambda_2^{n-i} \cdot \overline{\alpha}^i \overline{\beta}^{n-i}, \quad \sigma^k(\overline{\alpha}^i \overline{\beta}^{n-i}) =  (\lambda_1^i \lambda_2^{n-i})^k \cdot \overline{\alpha}^i \overline{\beta}^{n-i}.
\]
Since $\lambda_1/\lambda_2$ is not a root of unity, the numbers $(\lambda_1^i \lambda_2^{n-i})^k,\, i=0,1,\ldots,n$ are pairwise distinct, and thus $\overline{\alpha}^i \overline{\beta}^{n-i},\, i=0,1,\ldots,n$ are $n+1$ independent eigenvectors of $\sigma^k$. Since $\dim V = n+1$, every eigenvector of $\sigma^k$ is a multiple of one of them, completing the proof.
\end{proof}

Note that for a sequence satisfying a recurrence of order two, the matrix $A$ is determined by the recurrence, and it is easy to check that the eigenvalues $\lambda_1, \lambda_2$ are also the roots of the characteristic polynomial of the sequence. As mentioned in \citep{Everest-2003}, we call a sequence {\it non-degenerate} if $\lambda_1/\lambda_2$ is not a root of unity. The following example shows the condition that $\lambda_1/\lambda_2$ is not a root of unity is necessary.

\begin{example}\label{example-2-0}
Let $\mathbb{C}$ be the field of complex numbers and $(\mathbb{C}(\alpha, \beta), \sigma)$ be a bivariate difference field extension of the difference field $(\mathbb{C}, \sigma)$ with
\[
\sigma|_{\mathbb{C}} = {\rm id},\quad\sigma(\alpha)=\beta,  \quad\text{and}\quad \sigma(\beta)=-\alpha+\beta.
\]
It is easy to compute that
 \[
  A =
  \left(
    \begin{array}{cc}
      0 & -1 \\
      1 & 1 \\
    \end{array}
  \right)
  \]
has two eigenvalues
\[
\lambda_1=\frac{1+\sqrt{3}i}{2}, \quad \lambda_2=\frac{1-\sqrt{3}i}{2},
\]
with $(\lambda_1/\lambda_2)^3=1$. Since
\[
\sigma^3(\beta) = - \beta,
\]
we have $\beta \in \mathbb{C(\alpha,\beta)^{\sigma^*}}$. But
\[
\sigma(\beta) = \beta-\alpha
\]
is not a multiple of $\beta$, which implies
\[\beta\notin\mathbb{C(\alpha,\beta)^{\sigma}}.\]
\qed\end{example}

From the proof of Theorem \ref{thm-2-1}, we see that $\mathbb{F}(\alpha,\beta)=\mathbb{F}(\overline{\alpha},\overline{\beta})$ if matrix $A$ is diagonalizable in the field $\mathbb{F}$. In this case, we also could get $ \mathbb {F} (\alpha, \beta )$ by adding $\overline {\alpha}$ and $\overline{\beta} $ step by step, which is a $\Pi$-extension of the
field
$\mathbb{F}$ each time. But to make $A$ diagonalizable, we often need to introduce quadratic extension.
For instance, we need to introduce $\sqrt{5}$ and $\sqrt{2}$ for summation involving Fibonacci number $F_n$ and summation involving Pell number $P_n$, respectively, see the following examples.
However we can obtain the properties of $\mathbb{F}(\alpha,\beta)$ according to its properties in the algebraic closure $\overline{\mathbb{F}}(\alpha,\beta)$, which could get directly and avoid introducing the algebraic closed field $\mathbb{F}(\sqrt{5})(\sqrt{2})(\alpha,\beta)$ for summation involving Fibonacci number.

\begin{example}\label{example-2-1}
Let $\mathbb{R}$ be the field of real numbers. The summations involving only Fibonacci numbers $F_n$ or only Lucas numbers $L_n$ could be considered in the difference field $(\mathbb{R}(\alpha, \beta), \sigma)$ with
\[
\sigma|_{\mathbb{R}} = {\rm id}, \quad\sigma(\alpha)=\beta,  \quad\text{and}\quad \sigma(\beta)=\alpha+\beta.
\]
It is easy to compute that
\[
  A =
  \left(
    \begin{array}{cc}
      0 & 1 \\
      1 & 1 \\
    \end{array}
  \right)
  \]
has two eigenvalues
\[\lambda_1=\frac{1}{2}+\frac{\sqrt{5}}{2}, \quad \lambda_2=\frac{1}{2}-\frac{\sqrt{5}}{2},\]
with $\lambda_1/\lambda_2$ is not a root of unity.
Then by Theorem \ref{thm-2-1}, we have, for the homogeneous polynomials $p\in\mathbb{F}[\alpha,\beta]$,
\[p\in\mathbb{R(\alpha,\beta)^{\sigma^*}}\iff p\in\mathbb{R(\alpha,\beta)^{\sigma}}.\]
\qed\end{example}

\begin{example}\label{example-2-3}
Let $\mathbb{R}$ be the field of real numbers. The summations involving only Pell numbers $P_n$ or only Pell-Lucas numbers $Q_n$ could be considered in the difference field $(\mathbb{R}(\alpha, \beta), \sigma)$ with
\[
\sigma|_{\mathbb{R}} = {\rm id}, \quad \sigma(\alpha)=\beta,  \quad\text{and}\quad \sigma(\beta)=\alpha+2\beta.
\]
It is easy to compute that
\[
  A =
  \left(
    \begin{array}{cc}
      0 & 1 \\
      1 & 2 \\
    \end{array}
  \right)
  \]
has two eigenvalues
\[\lambda_1=1+\sqrt{2}, \quad \lambda_2=1-\sqrt{2},\]
with $\lambda_1/\lambda_2$ is not a root of unity.
Then by Theorem \ref{thm-2-1}, we have, for the homogeneous polynomials $p\in\mathbb{F}[\alpha,\beta]$,
\[p\in\mathbb{R(\alpha,\beta)^{\sigma^*}} \iff p\in\mathbb{R(\alpha,\beta)^{\sigma}}.\]
\qed\end{example}

The key quantity needed to compute universal denominator of the difference equation is the dispersion, which was first introduced with respect to the shift by Abramov \citep{Abramov-1971} for computing rational sums. Now, we show some results of the dispersion in bivariate difference field $(\mathbb{F}(\alpha, \beta), \sigma)$.

Recall that for any two polynomials $p,q\in\mathbb{F}[\alpha,\beta]\backslash\{0\}$, $\Spr_{\sigma}(p,q)$ denotes the \emph{spread} of $p$ and $q$ with respect to $\sigma$, i.e.,
\[\Spr_{\sigma}(p,q)=\{m\in \mathbb{N}|  \deg\left(\gcd(p,\sigma^{m}(q))\right)>0\},\]
$\Dis_{\sigma}(p,q)$ denotes the \emph{dispersion} of $p$ and $q$ with respect to $\sigma$, i.e.,
\[
\mbox{$\Dis_{\sigma}(p,q)$}=\begin{cases}
-1 & \mbox{if $\Spr_{\sigma}(p,q)$ is empty,}\\
\mbox{$\max(\Spr_{\sigma}(p,q)$)} & \mbox{if $\Spr_{\sigma}(p,q)$ is finite and nonempty,}\\
+\infty & \mbox{if $\Spr_{\sigma}(p,q)$ is infinite.}
\end{cases}
\]
Also $\Spr_{\sigma}(p)$ and $\Dis_{\sigma}(p)$ denote $\Spr_{\sigma}(p,p)$ and $\Dis_{\sigma}(p,p)$, respectively.
For a fraction $f\in\mathbb{F}(\alpha,\beta)\backslash\{0\}$, let
\[
\Dis_{\sigma}(f)= \max(\Dis_{\sigma}(p), \Dis_{\sigma}(p,q), \Dis_{\sigma}(q,p), \Dis_{\sigma}(q)),
\]
 where $p,q\in\mathbb{F}[\alpha,\beta]\backslash\{0\}$ such that $f=p/q$, $q$ is monic and $\gcd(p,q)=1$.

Note that $0\in\Spr_{\sigma}(p)$ for any element $p\in \mathbb{F}(\alpha, \beta)$, which implies that  $\Dis_{\sigma}(p)\geq0$. Furthermore $\Dis_{\sigma}(p)=+\infty$, if $p\in\mathbb{F(\alpha,\beta)^{\sigma^*}}$.

The computations of the dispersion in the difference field $(\mathbb{F}(\alpha, \beta), \sigma)$ can be reduced to squarefree polynomials, and the dispersion of a product is related to the dispersions of its components, which is analogous to Lemma 15 in
\citep{Bronstein-2000}.

\begin{lemma}\label{lem-spr}
Let $(\mathbb{F}(\alpha, \beta), \sigma)$ be a bivariate difference field extension of the difference field $(\mathbb{F},\sigma)$ and $p_1,\cdots,p_n,$ $q_1,\cdots,q_m\in\mathbb{F}[\alpha, \beta]\backslash\{0\}$. Then,

$(i)$ for any integers $e_i,f_j>0$,
\[
\Spr_{\sigma}(p_1^{e_1}\cdots p_n^{e_n},q_1^{f_1}\cdots q_m^{f_m})=\bigcup_{i=1}^n\bigcup_{j=1}^m\Spr_{\sigma}(p_i,q_j)\ \bigg(=\Spr_{\sigma}(p_1\cdots p_n,q_1\cdots q_m)\bigg)
\]
and
\[
\Dis_{\sigma}(p_1^{e_1}\cdots p_n^{e_n},q_1^{f_1}\cdots q_m^{f_m})=\max_{1\leq i\leq n,1\leq j\leq m}(\Dis_{\sigma}(p_i,q_j))\ \bigg(=\Dis_{\sigma}(p_1\cdots p_n,q_1\cdots q_m)\bigg).
\]

$(ii)$ if  $\Spr_{\sigma}(p_i,p_j)$ is empty for $i\neq j$, then for any integers $e_i>0$,
\[
\Spr_{\sigma}(p_1^{e_1}\cdots p_n^{e_n})=\bigcup_{i=1}^n \Spr_{\sigma}(p_i)\qquad \text{and} \qquad \Dis_{\sigma}(p_1^{e_1}\cdots p_n^{e_n})=\max_{1\leq i\leq n}(\Dis_{\sigma}(p_i)).
\]

$(iii)$ if $n=m$, $\Spr_{\sigma}(p_i,p_j)$, $\Spr_{\sigma}(q_i,q_j)$, $\Spr_{\sigma}(p_i,q_j)$ and $\Spr_{\sigma}(q_j,p_i)$ are all empty for $i\neq j$, and $\gcd(p_i, q_i)$=1 for all $i$, then for any integers $e_i>0$,
\vspace{2mm}
\[
\Dis_{\sigma}(f_1^{e_1}\cdots f_n^{e_n})=\max_{1\leq i\leq n}(\Dis_{\sigma}(f_i)),
\]
where $f_i=p_i/q_i$.
\end{lemma}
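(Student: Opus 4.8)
\section*{Proof proposal}

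The plan is to reduce everything to how the powers of $\sigma$ act on irreducible factors. By Theorem~\ref{thm-2-0}, each power $\sigma^k$ is a ring automorphism of $\mathbb{F}[\alpha,\beta]$ carrying irreducibles to irreducibles, so it transforms a factorization $q=c\prod_l r_l$ into irreducibles into a factorization $\sigma^k(q)=\sigma^k(c)\prod_l\sigma^k(r_l)$ into irreducibles. I will combine this with the elementary fact that for $a,b,c\in\mathbb{F}[\alpha,\beta]\setminus\{0\}$ one has $\deg\gcd(ab,c)>0$ if and only if $\deg\gcd(a,c)>0$ or $\deg\gcd(b,c)>0$, since a common irreducible factor of $ab$ and $c$ must divide $a$ or $b$ in the UFD $\mathbb{F}[\alpha,\beta]$. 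Beyond that the argument is bookkeeping with the conventions $\Dis_\sigma=-1$ on an empty spread and $\Dis_\sigma=+\infty$ on an infinite one.

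For $(i)$, I would start from $\sigma^k(q_1^{f_1}\cdots q_m^{f_m})=\sigma^k(q_1)^{f_1}\cdots\sigma^k(q_m)^{f_m}$ together with the observation that a positive power of a polynomial has the same irreducible factors as the polynomial itself. Iterating the gcd fact then gives $\deg\gcd\big(p_1^{e_1}\cdots p_n^{e_n},\,\sigma^k(q_1^{f_1}\cdots q_m^{f_m})\big)>0$ if and only if $\deg\gcd\big(p_i,\sigma^k(q_j)\big)>0$ for some $i,j$, i.e.\ $k\in\Spr_\sigma(p_1^{e_1}\cdots p_n^{e_n},q_1^{f_1}\cdots q_m^{f_m})$ iff $k\in\Spr_\sigma(p_i,q_j)$ for some $i,j$. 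This is the claimed spread identity and simultaneously shows the exponents play no role, giving the parenthetical equality. The dispersion formula follows by taking maxima of both sides, using that the maximum over a finite union equals the maximum of the maxima; the cases in which some or all of the $\Spr_\sigma(p_i,q_j)$ are empty, or some is infinite, are handled directly by the $-1$ and $+\infty$ conventions.

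Part $(ii)$ falls out of $(i)$: $\Spr_\sigma(p_1^{e_1}\cdots p_n^{e_n})=\bigcup_{i,j}\Spr_\sigma(p_i,p_j)$, and the hypothesis kills every term with $i\ne j$, leaving $\bigcup_i\Spr_\sigma(p_i)$; the dispersion statement is then immediate. For $(iii)$ I would first verify that $P:=p_1^{e_1}\cdots p_n^{e_n}$ and $Q:=q_1^{e_1}\cdots q_n^{e_n}$ are relatively prime, so that $P/Q$ is $f_1^{e_1}\cdots f_n^{e_n}$ in lowest terms (the monic normalization of the denominator is immaterial, as it multiplies $P$ and $Q$ by a common constant and changes no gcd degree): a common irreducible factor would divide some $p_i$ and some $q_j$, which contradicts $\gcd(p_i,q_i)=1$ when $i=j$ and would force $0\in\Spr_\sigma(p_i,q_j)$ — contrary to hypothesis — when $i\ne j$. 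Then I would expand $\Dis_\sigma(f_1^{e_1}\cdots f_n^{e_n})=\max\big(\Dis_\sigma(P),\Dis_\sigma(P,Q),\Dis_\sigma(Q,P),\Dis_\sigma(Q)\big)$ and apply $(i)$ to each of the four terms; because the hypotheses make every cross-spread with $i\ne j$ empty, each term collapses to the corresponding diagonal maximum, and regrouping over $i$ yields $\max_i\max\big(\Dis_\sigma(p_i),\Dis_\sigma(p_i,q_i),\Dis_\sigma(q_i,p_i),\Dis_\sigma(q_i)\big)=\max_i\Dis_\sigma(f_i)$.

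I do not expect a serious obstacle: the one thing to watch is that the value $-1$ contributed by an empty cross-spread is always dominated by a genuine diagonal term, which exists because $0\in\Spr_\sigma(p)$ for every nonzero $p$ so that each diagonal dispersion is at least $0$; and the coprimality check in $(iii)$ must be done before invoking the definition of $\Dis_\sigma$ of a fraction. All the real content is the factorization-preserving property of the powers of $\sigma$ supplied by Theorem~\ref{thm-2-0}.
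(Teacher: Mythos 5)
Your proof is correct, and it is essentially the argument the paper delegates to Bronstein's Lemma 15 (the paper omits the proof as ``entirely analogous''): reduce everything to irreducible factors using that each $\sigma^k$ preserves factorizations and degrees, then bookkeep the $-1$/$+\infty$ conventions, check coprimality of $P=\prod p_i^{e_i}$ and $Q=\prod q_i^{e_i}$ before invoking the definition of $\Dis_\sigma$ of a fraction, and collapse the four-term maximum. The only nitpick is your appeal to ``$0\in\Spr_\sigma(p)$ for every nonzero $p$'' (taken from the paper, but false for constant $p$); it is harmless here, since $-1$ is the minimal possible value of $\Dis_\sigma$, so the empty cross-spreads can never raise the maximum above the diagonal terms regardless.
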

\begin{proof}
The proof is entirely analogous to that of Lemma 15 in
\citep{Bronstein-2000}, and is omitted.
\end{proof}

We now characterize the polynomials whose dispersion is infinite, which is analogous to Theorem 6 in \citep{Bronstein-2000}.

\begin{theorem}{\label{thm-dis}}
Let $(\mathbb{F}(\alpha, \beta), \sigma)$ be a bivariate difference field extension of the difference field $(\mathbb{F},\sigma)$ and $\mathbb{F}[\alpha, \beta]$ be the polynomial ring over a unique factorization domain $\mathbb{F}$.
\begin{enumerate}
\item[$(i)$] Let $p\in\mathbb{F}[\alpha, \beta]\setminus \mathbb{F}$ and $m\in \mathbb{N^+}$ be such that $p|\sigma^mp$. Then, $\Dis_{\sigma}(h\sigma^np,gp)=+\infty$ for any $g,h\in \mathbb{F}[\alpha,\beta]\setminus\{0\}$ and $n\in \mathbb{Z}$.
\item[$(ii)$] Let $q,r\in\mathbb{F}[\alpha, \beta]\setminus\{0\}$. Then, $\Dis_{\sigma}(q,r)=+\infty$ if and only if $r$ has a nontrivial factor $p\in\mathbb{F}[\alpha,\beta]^{\sigma^*}\setminus\mathbb{F}$ such that $\sigma^np|q$ for some integer $n\geq0$.
\item[$(iii)$] Let $q\in \mathbb{F}[\alpha,\beta]\setminus\{0\}$ and suppose that $\sigma$ is an automorphism mapping $\mathbb{F}$ onto $\mathbb{F}$. Then, $\Dis_{\sigma}(q)=+\infty$ if and only if $q$ has a nontrivial factor $p\in\mathbb{F}[\alpha,\beta]^{\sigma^*}\setminus\mathbb{F}$.
\end{enumerate}
\end{theorem}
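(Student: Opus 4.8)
The plan is to prove the three parts in order, using part $(i)$ as the engine for the ``if'' directions of $(ii)$ and $(iii)$, and obtaining $(iii)$ as the special case $r=q$ of $(ii)$.

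For $(i)$, I would start from the fact that $\sigma$ preserves total degree (Theorem \ref{thm-2-0}$(i)$): since $p\mid\sigma^m p$ and $\deg(\sigma^m p)=\deg p$, the quotient must be a nonzero constant, i.e.\ $\sigma^m p = c\,p$ with $c\in\mathbb{F}\setminus\{0\}$; in particular $p\in\mathbb{F}[\alpha,\beta]^{\sigma^*}$. Iterating gives $\sigma^{km}p=c_k\,p$ with $c_k\in\mathbb{F}\setminus\{0\}$ for every $k\ge 0$, hence $\sigma^{\,n+km}p=\sigma^n(c_k)\,\sigma^n p$ is an associate of $\sigma^n p$. For all $k$ large enough $n+km\ge 0$, and then
\[
\sigma^{\,n+km}(gp)=\sigma^{\,n+km}(g)\cdot\sigma^n(c_k)\cdot\sigma^n p,
\]
so $\sigma^n p$ divides both $h\sigma^n p$ and $\sigma^{\,n+km}(gp)$. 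As $\deg\sigma^n p=\deg p\ge 1$, this puts $n+km$ in $\Spr_\sigma(h\sigma^n p,\,gp)$ for infinitely many $k$, whence $\Dis_\sigma(h\sigma^n p,gp)=+\infty$.

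For $(ii)$, the ``$\Leftarrow$'' direction I would reduce directly to $(i)$: if $p\in\mathbb{F}[\alpha,\beta]^{\sigma^*}\setminus\mathbb{F}$ then $p\mid\sigma^m p$ for some $m>0$, and writing $r=p\,g'$ and $q=\sigma^n p\cdot h'$ with $g',h'\in\mathbb{F}[\alpha,\beta]\setminus\{0\}$ is exactly the hypothesis of $(i)$. For ``$\Rightarrow$'', I would factor $q$ and $r$ into non-constant irreducibles and apply Lemma \ref{lem-spr}$(i)$ to write $\Spr_\sigma(q,r)$ as a finite union of sets $\Spr_\sigma(q_j,r_i)$ over irreducible factors; since the union is infinite, some $\Spr_\sigma(q_{j_0},r_{i_0})$ is infinite. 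By Theorem \ref{thm-2-0}$(iii)$ every $\sigma^\ell r_{i_0}$ is irreducible, so $\deg\gcd(q_{j_0},\sigma^\ell r_{i_0})>0$ forces $q_{j_0}$ and $\sigma^\ell r_{i_0}$ to be associates. Choosing two exponents $\ell_1<\ell_2$ in this infinite set gives $\sigma^{\ell_1}r_{i_0}\sim\sigma^{\ell_2}r_{i_0}$, and applying the injective map $\sigma^{-\ell_1}$ yields $r_{i_0}\mid\sigma^{\,\ell_2-\ell_1}r_{i_0}$; hence $p:=r_{i_0}\in\mathbb{F}[\alpha,\beta]^{\sigma^*}\setminus\mathbb{F}$ is a nontrivial factor of $r$. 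Finally, for any $\ell_0$ in $\Spr_\sigma(q_{j_0},r_{i_0})$ we have $\sigma^{\ell_0}p\sim q_{j_0}\mid q$ with $\ell_0\ge 0$, which is the required divisibility $\sigma^{\ell_0}p\mid q$.

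Part $(iii)$ is then obtained by taking $r=q$ in $(ii)$: the ``$\Leftarrow$'' direction uses $n=0$ so that $p\mid q$ already, and the ``$\Rightarrow$'' direction is the statement of $(ii)$ read off with $r=q$; the extra hypothesis that $\sigma$ maps $\mathbb{F}$ onto $\mathbb{F}$ is what makes $\sigma^{-\ell_1}$ preserve $\mathbb{F}[\alpha,\beta]$, legitimizing the step used in $(ii)$. I expect the forward direction of $(ii)$ to be the main obstacle: one has to extract from ``infinitely many shifts of $r$ share a factor with $q$'' a single semi-periodic irreducible factor, and this rests on the two bivariate-specific facts that $\sigma$ preserves irreducibility (turning a common factor of two irreducibles into an associated pair) and that it preserves total degree (so that $r_{i_0}\mid\sigma^m r_{i_0}$ upgrades to semi-periodicity rather than mere divisibility).
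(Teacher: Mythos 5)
Your proof is correct and is essentially the argument the paper itself defers to: the paper omits the proof and cites Theorem 6 of Bronstein (2000), and your three steps (degree preservation forcing $\sigma^m p=cp$ in $(i)$, reducing the forward direction of $(ii)$ to an infinite spread of a single pair of irreducible factors via Lemma \ref{lem-spr} and Theorem \ref{thm-2-0}, and obtaining $(iii)$ as the case $r=q$) are exactly that argument transplanted to the bivariate setting. One cosmetic remark: since in this paper $\sigma$ is by definition an automorphism with $\sigma(\mathbb{F})=\mathbb{F}$, the surjectivity you invoke for $\sigma^{-\ell_1}$ is already available in $(ii)$, and the extra hypothesis in $(iii)$ is simply inherited from Bronstein's more general setting.
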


\begin{proof}
The proof is entirely analogous to that of Theorem 6 in
\citep{Bronstein-2000}, and is omitted.
\end{proof}

Actually, for a given polynomial with infinite dispersion, the dispersions of its irreducible factors are not all infinite. We recall the definition about the splitting factorization, which was firstly defined by Bronstein \citep{Bronstein-1997} for the differential case and generalized in \citep{Bronstein-2000}. For any $p\in \mathbb{F}[\alpha,\beta]$, we say that $p_{\infty}\overline{p}$ is \emph{a splitting factorization} of $p$ with respect to $\sigma$ if $p_{\infty},\overline{p}\in \mathbb{F}[\alpha,\beta]$, $\Dis_{\sigma}(q)=+\infty$ for every irreducible factor $q$ of $p_{\infty}$, and $\Dis_{\sigma}(q)\in\mathbb{Z}$ for every irreducible factor $q$ of $\overline{p}$. We call $\overline{p}$ and $p_{\infty}$ the finite and infinite parts of $p$, respectively.
According to the definition, we show a sufficient condition for finite part in the splitting factorization.

\begin{theorem}\label{thm-prime}
Let $(\mathbb{F}(\alpha, \beta), \sigma)$ be a bivariate difference field extension of the difference field $(\mathbb{F},\sigma)$ and $a,b\in \mathbb{F}[\alpha,\beta]$ be two polynomials. Then $\Spr_{\sigma}(\overline{a},\overline{b})$ is finite, where $\overline{a},\overline{b}$ are the finite parts of $a,b$, respectively.
\end{theorem}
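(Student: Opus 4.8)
The plan is to reduce the statement to a claim about single pairs of irreducible factors, and then to show that an infinite spread between two such factors would force one of them to have infinite dispersion, contradicting the defining property of the finite part. First I would write the finite parts (up to nonzero constants, which do not affect spreads) as $\overline{a}=p_1^{e_1}\cdots p_s^{e_s}$ and $\overline{b}=q_1^{f_1}\cdots q_t^{f_t}$ with $p_i,q_j\in\mathbb{F}[\alpha,\beta]\setminus\mathbb{F}$ irreducible and $e_i,f_j>0$; by the very definition of the finite part, $\Dis_{\sigma}(p_i)\in\mathbb{Z}$ and $\Dis_{\sigma}(q_j)\in\mathbb{Z}$ for all $i,j$. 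Lemma \ref{lem-spr}$(i)$ then gives
\[
\Spr_{\sigma}(\overline{a},\overline{b})=\bigcup_{i=1}^{s}\bigcup_{j=1}^{t}\Spr_{\sigma}(p_i,q_j),
\]
a finite union, so it suffices to prove that $\Spr_{\sigma}(p,q)$ is finite whenever $p,q\in\mathbb{F}[\alpha,\beta]\setminus\mathbb{F}$ are irreducible with $\Dis_{\sigma}(p),\Dis_{\sigma}(q)\in\mathbb{Z}$.

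To prove this reduced claim I would argue by contradiction: assume $\Spr_{\sigma}(p,q)$ is infinite and pick $m_1<m_2<\cdots$ in it. For each $k$ we have $\deg\gcd(p,\sigma^{m_k}q)>0$; but by Theorem \ref{thm-2-0} the polynomial $\sigma^{m_k}q$ is again irreducible and has total degree $\deg q>0$, and $p$ is itself irreducible and non-constant, so $p\mid\sigma^{m_k}q$ forces $\sigma^{m_k}q=u_k\,p$ for some unit $u_k\in\mathbb{F}^*$. Putting $d_k=m_k-m_1\ge1$ for $k\ge2$ and applying $\sigma^{d_k}$ to the identity $\sigma^{m_1}q=u_1p$, I obtain
\[
u_k\,p=\sigma^{m_k}q=\sigma^{d_k}(u_1)\,\sigma^{d_k}(p),
\]
so that $\sigma^{d_k}(p)=\bigl(u_k/\sigma^{d_k}(u_1)\bigr)p$, and the factor $u_k/\sigma^{d_k}(u_1)$ lies in $\mathbb{F}^*$ because $\sigma$ restricts to an automorphism of $\mathbb{F}$. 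Hence $\gcd(p,\sigma^{d_k}p)$ is an associate of $p$, of positive degree, i.e.\ $d_k\in\Spr_{\sigma}(p)$. Since the $d_k$, $k\ge2$, are infinitely many distinct positive integers, $\Spr_{\sigma}(p)$ is infinite, i.e.\ $\Dis_{\sigma}(p)=+\infty$, contradicting $\Dis_{\sigma}(p)\in\mathbb{Z}$. Therefore each $\Spr_{\sigma}(p_i,q_j)$ is finite, and so is $\Spr_{\sigma}(\overline{a},\overline{b})$.

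The only genuinely delicate point will be the step that upgrades the divisibility $p\mid\sigma^{m_k}q$ to the equality $\sigma^{m_k}q=u_k\,p$ — this uses crucially both that $\sigma$ preserves the total degree (so $\deg\sigma^{m_k}q=\deg q$) and that it preserves irreducibility (Theorem \ref{thm-2-0}$(i)$, $(iii)$) — together with the bookkeeping of the units $u_k$ under iterates of $\sigma$; everything else is routine. I would also remark that the case $\deg p\neq\deg q$ needs no separate treatment, since then $p\mid\sigma^{m}q$ is impossible for every $m$ and $\Spr_{\sigma}(p,q)=\emptyset$.
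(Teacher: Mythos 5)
Your proposal is correct and follows essentially the same route as the paper: reduce via Lemma \ref{lem-spr}$(i)$ to a pair of irreducible factors of the finite parts, use Theorem \ref{thm-2-0} (degree and irreducibility preserved) to turn a nonconstant gcd into $\sigma^{m_k}q=u_k\,p$ with $u_k\in\mathbb{F}^*$, and derive a contradiction with the finiteness of the dispersion of that factor. The only cosmetic difference is that the paper phrases the contradiction as $\overline{a}_s\in\mathbb{F}(\alpha,\beta)^{\sigma^*}$ (semi-periodicity), whereas you exhibit infinitely many elements $d_k\in\Spr_{\sigma}(p)$ directly; these amount to the same thing.
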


\begin{proof}
Suppose $\Dis_{\sigma}(\overline{a},\overline{b})=+\infty$, let $\overline{a}=u\prod_{j=1}^M\overline{a}_j^{e_j}$ and $\overline{b}=v\prod_{j=1}^N\overline{b}_j^{f_j}$ be the irreducible factorizations of $\overline{a}$ and $\overline{b}$, respectively, where $u,v\in \mathbb{F}\setminus\{0\}$, $\overline{a}_j,\overline{b}_j\in\mathbb{F}[\alpha, \beta]\backslash\mathbb{F}$, $M,N\in\mathbb{N^+}$, and $e_j,f_j>0$. By Lemma \ref{lem-spr}, $S_{s,t}=\Spr_{\sigma}(\overline{a}_s,\overline{b}_t)$ must be infinite for some pair $s,t$. Since $\overline{a}_s$ and $\overline{b}_t$ are irreducible, for each $m,n\in S_{s,t}$, there exists $u_m,u_n\in \mathbb{F}$ such that $\sigma^{m}\overline{b}_t=u_m\overline{a}_s$ and $\sigma^{n}\overline{b}_t=u_n\overline{a}_s$ by Theorem \ref{thm-2-0}, which implies $\sigma^{-m}(\overline{a}_s)\mid \overline{b}_t$, $\sigma^{-n}(\overline{a}_s)\mid \overline{b}_t$ and
\[
\overline{a}_s\in\mathbb{F}(\alpha,\beta)^{\sigma^*},
\]
a contradiction.
\end{proof}

Then, we give one result about infinite part in the splitting factorization, which also be used in computing the universal denominator of the difference equation (\ref{eq-dif}).


\begin{theorem}\label{thm-2-2}
Let $(\mathbb{F}(\alpha, \beta), \sigma)$ be a bivariate difference field extension of the difference field $(\mathbb{F}, \sigma)$ with $\sigma|_\mathbb{F}={\rm id}$ and parameters $u,v$, and let $A$ be the matrix
\begin{align*}
  A =
  \left(
    \begin{array}{cc}
      0 & u \\
      1 & v \\
    \end{array}
  \right).
  \end{align*}
If $A$ has two different eigenvalues $\lambda_1,\lambda_2\in\mathbb{F}$ and $\lambda_1/\lambda_2$ is not a root of unity, then for the homogeneous polynomial $p\in\mathbb{F}(\alpha,\beta)^{\sigma}$ with $\deg p=m$, there exist $c$ and $0\leq i\leq m$ such that
\begin{align}\label{eq-2-1}
p=c\cdot h_1^{i}\cdot h_2^{m-i}
\end{align}
where $h_1(\alpha,\beta)=(\alpha,\beta)X_1$, $h_2(\alpha,\beta)=(\alpha,\beta)X_2$ and $X_1,X_2$ are the eigenvectors corresponding to the eigenvalues $\lambda_1,\lambda_2$, respectively.
\end{theorem}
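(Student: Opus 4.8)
The plan is to repackage the linear-algebra computation already carried out in the proof of Theorem~\ref{thm-2-1}. Fix $m=\deg p$ and let
$V=\{\sum_{i=0}^m c_i\alpha^i\beta^{m-i}\mid c_i\in\mathbb{F}\}$ be the $(m+1)$-dimensional $\mathbb{F}$-vector space of homogeneous polynomials of degree $m$. Since $\sigma|_\mathbb{F}=\mathrm{id}$ and $\sigma$ preserves total degree (Theorem~\ref{thm-2-0}(i)), $\sigma$ restricts to an $\mathbb{F}$-linear automorphism of $V$. Write $h_1=(\alpha,\beta)X_1$ and $h_2=(\alpha,\beta)X_2$. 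Because $\lambda_1\neq\lambda_2$, the eigenvectors $X_1,X_2\in\mathbb{F}^2$ are linearly independent, so $(\alpha,\beta)\mapsto(h_1,h_2)$ is an invertible $\mathbb{F}$-linear change of coordinates; consequently the $m+1$ elements $h_1^i h_2^{m-i}$, $i=0,\dots,m$, are the degree-$m$ monomials in the new coordinates and therefore form an $\mathbb{F}$-basis of $V$. As computed in the proof of Theorem~\ref{thm-2-1}, $\sigma(h_1^i h_2^{m-i})=\lambda_1^i\lambda_2^{m-i}\cdot h_1^i h_2^{m-i}$, so this basis diagonalizes $\sigma|_V$.

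Next I would record that these eigenvalues are pairwise distinct: if $\lambda_1^i\lambda_2^{m-i}=\lambda_1^j\lambda_2^{m-j}$ for $0\le i<j\le m$, then $(\lambda_1/\lambda_2)^{j-i}=1$ with $0<j-i\le m$, contradicting the hypothesis that $\lambda_1/\lambda_2$ is not a root of unity. Hence $\sigma|_V$ has $m+1$ distinct eigenvalues, and since $\dim V=m+1$ each eigenspace is one-dimensional, namely the line $\mathbb{F}\cdot h_1^i h_2^{m-i}$.

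Finally I would invoke the hypothesis on $p$. By definition of $\mathbb{F}(\alpha,\beta)^\sigma$ we have $\sigma p=w\,p$ for some $w\in\mathbb{F}[\alpha,\beta]^*$; since $\sigma$ preserves total degree and $\deg p=\deg\sigma p=m$, necessarily $\deg w=0$, i.e. $w\in\mathbb{F}\setminus\{0\}$. Thus $p\in V\setminus\{0\}$ is an eigenvector of $\sigma|_V$ with eigenvalue $w$. Because every eigenspace of $\sigma|_V$ equals $\mathbb{F}\cdot h_1^i h_2^{m-i}$ for a unique $i$, there exist $0\le i\le m$ and $c\in\mathbb{F}\setminus\{0\}$ with $p=c\cdot h_1^i h_2^{m-i}$ (and, as a byproduct, $w=\lambda_1^i\lambda_2^{m-i}$), which is exactly \eqref{eq-2-1}.

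I do not expect a serious obstacle here: the statement is essentially a corollary of the diagonalization already performed for Theorem~\ref{thm-2-1}. The two points needing care are (a) confirming that $\{h_1^i h_2^{m-i}\}_{i=0}^m$ is genuinely a basis of $V$, which uses $X_1,X_2$ being linearly independent and hence relies on $\lambda_1\neq\lambda_2$ rather than on the non-root-of-unity condition; and (b) the distinctness of the eigenvalues $\lambda_1^i\lambda_2^{m-i}$, which is precisely where the full strength of the non-root-of-unity hypothesis enters.
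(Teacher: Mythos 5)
Your argument is correct and is essentially the paper's own proof: the paper proves Theorem~\ref{thm-2-2} by appealing directly to the diagonalization of $\sigma$ on the space $V$ of homogeneous polynomials carried out in the proof of Theorem~\ref{thm-2-1}, which is exactly what you have spelled out (basis $h_1^i h_2^{m-i}$, pairwise distinct eigenvalues $\lambda_1^i\lambda_2^{m-i}$ from the non-root-of-unity hypothesis, and semi-invariance of $p$ forcing it onto one of these eigenlines). Your write-up just makes explicit the details the paper leaves implicit.
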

\begin{proof}
According to the proof of Theorem \ref{thm-2-1}, it could be gotten directly.
\end{proof}

\begin{example}\label{example-2-2}
For the Example \ref{example-2-1}, the bivariate difference field
$(\mathbb{R}(\alpha, \beta), \sigma)$  with $\sigma|_\mathbb{R}={\rm id}$, and
 \[
  A =
  \left(
    \begin{array}{cc}
      0 & 1 \\
      1 & 1 \\
    \end{array}
  \right).
  \]
We can get the two eigenvectors
\[X_1=\left(1,\frac{1}{2}+\frac{\sqrt{5}}{2}\right)^T,\ X_2=\left(1,\frac{1}{2}-\frac{\sqrt{5}}{2}\right)^T.
\]
Then for the homogeneous polynomial $p\in\mathbb{R}[\alpha,\beta]^{\sigma}$ with $\deg p=m$, there exist a $c\in\mathbb{F}$ and $1\leq i\leq m$ such that
\begin{align*}
p=c\cdot \left(\alpha+\left(\frac{1}{2}+\frac{\sqrt{5}}{2}\right)\beta\right)^{i}\cdot \left(\alpha+\left(\frac{1}{2}-\frac{\sqrt{5}}{2}\right)\beta\right)^{m-i}.
\end{align*}
\qed\end{example}

\begin{example}\label{example-2-4}
For the Example \ref{example-2-3}, the bivariate difference field
$(\mathbb{R}(\alpha, \beta), \sigma)$  with $\sigma|_\mathbb{R}={\rm id}$, and
 \[A =
  \left(
    \begin{array}{cc}
      0 & 1 \\
      1 & 2 \\
    \end{array}
  \right).
  \]
We can get the two eigenvectors
\[X_1=\left(1,1+\sqrt{2}\right)^T,\ X_2=\left(1,1-\sqrt{2}\right)^T.
\]
Then for the homogeneous polynomial $p\in\mathbb{R}[\alpha,\beta]^{\sigma}$ with $\deg p=m$, there exist a $c\in\mathbb{F}$ and $1\leq i\leq m$ such that
\[
p=c\cdot \left(\alpha+(1+\sqrt{2})\beta\right)^{i}\cdot \left(\alpha+(1-\sqrt{2})\beta\right)^{m-i}.
\]
\qed
\end{example}
\section{Trivial Case}\label{sec-3}
In this section, for the bivariate difference field $(\mathbb{F}(\alpha,\beta),\sigma)$ with $\sigma|_\mathbb{F}={\rm id}$ and the ratio $\lambda_1/\lambda_2\in\mathbb{F}$ of eigenvectors of matrix $\left(
    \begin{array}{cc}
      0 & u \\
      1 & v \\
    \end{array}
  \right)$ is not root of unity, we consider the trivial case $a, b\in\mathbb{F}$ in the difference equation (\ref{eq-dif}), i.e., for a rational function $f(\alpha, \beta)\in\mathbb{F}(\alpha, \beta)$ and $a, b\in\mathbb{F}$, finding a rational function $g(\alpha,\beta)\in\mathbb{F}(\alpha, \beta)$ such that
\begin{align}\label{eq-3-01}
a\sigma\left(g(\alpha, \beta)\right)+bg(\alpha, \beta)=f(\alpha, \beta),
\end{align}
We will give an algorithm for finding $g(\alpha,\beta)$ and show some examples.

Note the difference equation (\ref{eq-3-01}) with $a=1,b=-1$ is same as the indefinite summation problem. Motivated by the Gosper's algorithm, we want to reduce the problem of finding a rational solution of (\ref{eq-3-01}) to the problem of finding a polynomial solution of some equation.
Unfortunately, we have not reduced successfully, since there are many polynomials such that their dispersion is $+\infty$.
But we will reduce the problem of finding a rational solution of (\ref{eq-3-01}) to the problem of finding a rational solution of some equation, in which the denominator of any reduced rational solution only has infinite part.

Assume $g(\alpha, \beta)=y(\alpha, \beta)f(\alpha, \beta)$ is a solution of (\ref{eq-3-01}), then
\[a\sigma\left(y(\alpha, \beta)\right)\frac{\sigma\left(f(\alpha, \beta)\right)}{f(\alpha, \beta)}+by(\alpha, \beta)=1.\]
If $\frac{\sigma\left(f(\alpha, \beta)\right)}{f(\alpha, \beta)}$ could be written as the form $\frac{A(\alpha,\beta)}{B(\alpha,\beta)}\cdot\frac{\sigma(C(\alpha,\beta))}{C(\alpha,\beta)}$, where $A(\alpha,\beta)$, $B(\alpha,\beta)$, $C(\alpha,\beta)\in\mathbb{F}[\alpha, \beta]$, then there is a rational function $x(\alpha, \beta)\in\mathbb{F}(\alpha, \beta)$ such that
\[
y(\alpha, \beta)=\frac{\sigma^{-1}\left(B(\alpha, \beta)\right)x(\alpha, \beta)}{C(\alpha, \beta)},\]
and
\[
aA(\alpha,\beta)\sigma\left(x(\alpha,\beta)\right)
+b\sigma^{-1}\left(B(\alpha,\beta)\right)x(\alpha,\beta)=C(\alpha,\beta).
\]

Based on these, we show the steps of the algorithm to compute $g(\alpha,\beta)$:
\begin{breakablealgorithm}
\caption{Compute a Solution of Trivial Case}
\label{alg-1}
\begin{algorithmic}[htb]
\STATE{\textbf{Input:} Parameters $u,v$ of the bivariate difference field $(\mathbb{F}(\alpha,\beta),\sigma)$ and a rational function $f(\alpha, \beta)$.}
\STATE{\textbf{Output:} A rational function $g(\alpha, \beta)$ satisfying (\ref{eq-3-01}), if one exists; FALSE, otherwise.}
\STATE{\textbf{Step 1:} Compute the ratio $r(\alpha,\beta)=\frac{\sigma\left(f(\alpha, \beta)\right)}{f(\alpha, \beta)}\in\mathbb{F}(\alpha, \beta)$.}
\STATE{\textbf{Step 2:} Write $r(\alpha,\beta)=\frac{A(\alpha,\beta)}{B(\alpha,\beta)}\cdot\frac{\sigma(C(\alpha,\beta))}{C(\alpha,\beta)}$ where $A(\alpha,\beta), B(\alpha,\beta), C(\alpha,\beta)\in\mathbb{F}[\alpha, \beta]$ satisfy $\gcd(\overline{A}(\alpha,\beta), \sigma^h\left(\overline{B}(\alpha,\beta))\right)=1$ for all $h\in\mathbb{N}$.}
\raggedright{\STATE{\textbf{Step 3:} Find a nonzero rational solution $x(\alpha, \beta)\in\mathbb{F}(\alpha, \beta)$ such that $aA(\alpha,\beta)\sigma\left(x(\alpha,\beta)\right)+b\sigma^{-1}\left(B(\alpha,\beta)\right)$ $x(\alpha,\beta)=C(\alpha,\beta)$ if one exists; otherwise return ``FALSE'' and stop.}}
\STATE{\textbf{Step 4:} Return ``$\frac{\sigma^{-1}\left(B(\alpha, \beta)\right)x(\alpha, \beta)}{C(\alpha, \beta)} f(\alpha, \beta)$'' and stop.}
  \end{algorithmic}
\end{breakablealgorithm}

Now, we show the details for each steps and some examples in following subsections.
\subsection{Step 2 of Algorithm \ref{alg-1}}
In this subsection, we show how to write $r(\alpha,\beta)$ in the following form
\begin{align}\label{eq-3-02}
 r(\alpha,\beta)=\frac{A(\alpha,\beta)}{B(\alpha,\beta)}\cdot\frac{\sigma(C(\alpha,\beta))}{C(\alpha,\beta)}
\end{align}
where $A(\alpha,\beta), B(\alpha,\beta), C(\alpha,\beta)\in\mathbb{F}[\alpha, \beta]$ are polynomials and the finite part of $A,B$ satisfy
\begin{align}\label{eq-3-03}
\deg \gcd\left(\overline{A}(\alpha,\beta), \sigma^h\left(\overline{B}\left(\alpha,\beta\right)\right)\right)=0,
\end{align}
for all nonnegative integers $h$.

Let $r(\alpha,\beta)=\frac{p(\alpha,\beta)}{q(\alpha,\beta)}$ where $p(\alpha,\beta),q(\alpha,\beta)$ are relatively prime polynomials. Then by Theorem \ref{thm-prime}, we have $\Spr_{\sigma}(\overline{p},\overline{q})$ is finite where $\overline{p}, \overline{q}$ are the finite part of $p, q$, respectively.
We will see that $r(\alpha,\beta)$ can be written as (\ref{eq-3-02}) under the condition (\ref{eq-3-03}). For this, we do the following computation.

\begin{breakablealgorithm}
\caption{Step 2 of Algorithm \ref{alg-1}}
\label{alg-2}
\raggedright{\textbf{Step 2.1:} Let $r(\alpha,\beta)=\frac{p(\alpha,\beta)}{q(\alpha,\beta)}$ where $p(\alpha,\beta),q(\alpha,\beta)$ are relatively prime polynomials.}\\
\raggedright{\textbf{Step 2.2:} Compute $\Spr_{\sigma}(\overline{p},\overline{q})=\{m_1,m_2,\ldots,m_N\}$ and $m_1<m_2<\ldots<m_N$.}\\
\raggedright{\textbf{Step 2.3:} $p_0:=\overline{p}$; $q_0:=\overline{q}$;}
\begin{algorithmic}[htb]
 \FOR{$1\leq j\leq N$}
  \STATE{$s_j(\alpha,\beta):=\gcd(p_{j-1}(\alpha,\beta),\sigma^{m_j}q_{j-1}(\alpha,\beta));$}
  \STATE{$p_{j}(\alpha,\beta):=\frac{p_{j-1}(\alpha,\beta)}{s_j(\alpha,\beta)};
         q_{j}(\alpha,\beta):=\frac{q_{j-1}(\alpha,\beta)}{\sigma^{-m_j}\left(s_j(\alpha,\beta)\right)}$.}
\ENDFOR
\STATE {$A(\alpha,\beta):=p_{\infty}p_N(\alpha,\beta)$;}
\STATE {$B(\alpha,\beta):=q_{\infty}q_N(\alpha,\beta)$;} \STATE{$C(\alpha,\beta):=\prod_{i=1}^{N}\prod_{j=1}^{m_i}\sigma^{-j}s_i(\alpha,\beta)$.}
\end{algorithmic}
\end{breakablealgorithm}

According to the computation, we have
\begin{align*}
\frac{A(\alpha,\beta)}{B(\alpha,\beta)}\frac{\sigma C(\alpha,\beta)}{C(\alpha,\beta)}
&=\frac{p_{\infty}p_N(\alpha,\beta)}{q_{\infty}q_N(\alpha,\beta)}
\cdot
\frac{\sigma\left(\prod_{i=1}^{N}\prod_{j=1}^{m_i}\sigma^{-j}s_i(\alpha,\beta)\right)}
{\prod_{i=1}^{N}\prod_{j=1}^{m_i}\sigma^{-j}s_i(\alpha,\beta)}\\
&=\frac{p_{\infty}p_N(\alpha,\beta)}{q_{\infty}q_N(\alpha,\beta)}
\cdot
\frac{\prod_{i=1}^{N}s_i(\alpha,\beta)}
{\prod_{i=1}^{N}\sigma^{-m_i}s_i(\alpha,\beta)}
\cdot
\frac{\prod_{i=1}^{N}\prod_{j=2}^{m_i}\sigma^{-j+1}s_i(\alpha,\beta)}
{\prod_{i=1}^{N}\prod_{j=1}^{m_i-1}\sigma^{-j}s_i(\alpha,\beta)}\\
&=\frac{p_{\infty}p_0(\alpha,\beta)}{q_{\infty}q_0(\alpha,\beta)}=r(\alpha,\beta).
\end{align*}
so $r(\alpha,\beta)$ can be written as (\ref{eq-3-02}). Moreover, for the splitting factorization of $A,B,C$ with respect to $\sigma$, we have
\[A_{\infty}=p_{\infty},\quad \overline{A}=p_N(\alpha,\beta),\quad B_{\infty}=q_{\infty},\quad \overline{B}=q_N(\alpha,\beta),\quad C=\overline{C}.\]

Now, we show that $A(\alpha,\beta)$, $B(\alpha,\beta)$ and $C(\alpha,\beta)$ produced by this computation satisfy the condition (\ref{eq-3-03}). Set additionally $m_{N+1}:=+\infty$.

\begin{proposition}\label{pro-3-1}
Let $i,j,k,h,N\in\mathbb{N}$, $0\leq k\leq i, j \leq N$ and $h<m_{k+1}$, where $m_{k+1}$ is defined in the Step 2.2. Then
\begin{align}\label{eq-3-05}
\deg\left(\gcd\left(p_i, \sigma^hq_j\right)\right)=0,
\end{align}
where $p_i,q_i$ are defined in the Step 2.3.
\end{proposition}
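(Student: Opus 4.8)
The plan is to exploit two structural features of the loop in Step~2.3: the sequences $p_0,p_1,\dots$ and $q_0,q_1,\dots$ form descending chains under divisibility (each $p_j$ divides $p_{j-1}$ and each $q_j$ divides $q_{j-1}$), and each single pass of the loop makes the current pair coprime under the shift $m_j$. Together with the fact that the spread can only shrink when one passes to divisors, this reduces the proposition to a short computation.

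First I would record the divisibility chains. Since $s_j=\gcd(p_{j-1},\sigma^{m_j}q_{j-1})$ divides $p_{j-1}$, the quotient $p_j$ is a polynomial with $p_j\mid p_{j-1}$; and since $s_j\mid\sigma^{m_j}q_{j-1}$, applying the automorphism $\sigma^{-m_j}$ — which by Theorem~\ref{thm-2-0} is degree-preserving, carries $\mathbb{F}[\alpha,\beta]$ onto itself, and hence preserves divisibility — gives $\sigma^{-m_j}(s_j)\mid q_{j-1}$, so $q_j$ is a polynomial with $q_j\mid q_{j-1}$. Iterating, $p_i\mid p_{i'}$ and $q_i\mid q_{i'}$ whenever $i\ge i'$; in particular $p_i\mid \overline{p}$ and $q_j\mid \overline{q}$ for all $0\le i,j\le N$ (recall $p_0=\overline p$, $q_0=\overline q$).

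The heart of the argument is the one-step statement $\deg\gcd(p_j,\sigma^{m_j}q_j)=0$. From $p_{j-1}=p_j\,s_j$ and $\sigma^{m_j}q_{j-1}=\sigma^{m_j}(q_j)\,s_j$, and using that $\mathbb{F}[\alpha,\beta]$ is a UFD, so that $\gcd(ac,bc)=c\,\gcd(a,b)$ up to units, I get
\[
s_j=\gcd(p_{j-1},\sigma^{m_j}q_{j-1})=\gcd\bigl(p_j s_j,\ \sigma^{m_j}(q_j)\,s_j\bigr)=s_j\cdot\gcd(p_j,\sigma^{m_j}q_j),
\]
and cancelling the nonzero polynomial $s_j$ yields $\gcd(p_j,\sigma^{m_j}q_j)=1$. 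Feeding in the chains above, for any $i,i'\ge j$ we have $p_i\mid p_j$ and $\sigma^{m_j}q_{i'}\mid\sigma^{m_j}q_j$, hence $\gcd(p_i,\sigma^{m_j}q_{i'})\mid\gcd(p_j,\sigma^{m_j}q_j)=1$, i.e.\ $\deg\gcd(p_i,\sigma^{m_j}q_{i'})=0$ for all $i,i'\ge j$.

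To finish, suppose for contradiction that $\deg\gcd(p_i,\sigma^h q_j)>0$ under the hypotheses $0\le k\le i,j\le N$ and $h<m_{k+1}$. Since $p_i\mid\overline p$ and $\sigma^h q_j\mid\sigma^h\overline q$, we get $\gcd(p_i,\sigma^h q_j)\mid\gcd(\overline p,\sigma^h\overline q)$, whence $\deg\gcd(\overline p,\sigma^h\overline q)>0$, i.e.\ $h\in\Spr_\sigma(\overline p,\overline q)=\{m_1,\dots,m_N\}$, say $h=m_l$. As the $m$'s are strictly increasing and $h<m_{k+1}$, necessarily $l\le k$, so $l\le k\le i$ and $l\le k\le j$; applying the one-step statement with step index $l$ gives $\deg\gcd(p_i,\sigma^{m_l}q_j)=0$, a contradiction. (For $k=0$ this says nothing can contribute below $m_1$, and indeed $m_1\ge 1$ since $\gcd(p,q)=1$ forces $0\notin\Spr_\sigma(\overline p,\overline q)$.) The only genuinely delicate point is the one-step identity $\gcd(p_j,\sigma^{m_j}q_j)=1$: it requires correctly distributing $\sigma^{m_j}$ over the factorization $q_{j-1}=q_j\cdot\sigma^{-m_j}(s_j)$ and then invoking the UFD cancellation law; everything else is bookkeeping with divisibility and with the monotonicity of spreads furnished by Lemma~\ref{lem-spr}.
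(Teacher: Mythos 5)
Your proof is correct and follows essentially the same route as the paper's: both reduce the claim, via the divisibility chains $p_i\mid p_l$, $q_j\mid q_l$, to the single-step coprimality $\gcd(p_l,\sigma^{m_l}q_l)=1$ obtained by cancelling $s_l=\gcd(p_{l-1},\sigma^{m_l}q_{l-1})$ in the UFD $\mathbb{F}[\alpha,\beta]$. The only difference is organizational: the paper phrases the argument as an induction on $k$ (whose hypothesis is never really used), while you argue directly by locating $h$ in $\Spr_\sigma(\overline p,\overline q)$ and deriving a contradiction, which if anything is slightly cleaner.
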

\begin{proof}
Since $p_i\mid p$ and $q_j\mid q$, it follows that $\gcd(p_i, \sigma^hq_j)$ divides $\gcd(p, \sigma^hq)$, for any $h$. If $h\notin\Spr_{\sigma}(p,q)$ then $\deg(\gcd(p,\sigma^hq))=1$. This proves the assertion when $h\notin\Spr_{\sigma}(p,q)$.

When $h\in\Spr_{\sigma}(p,q)$, we will prove the assertion by induction on $k$. Recall that $\Spr_{\sigma}(p,q)=\{m_1<m_2<\ldots<m_N\}$.

$k=0$: In this case there is nothing to prove, since there is no $h\in\Spr_{\sigma}(p,q)$ such that $h<m_1$.

$k>0$: Assume that the assertion holds for all $h<m_k$. It remains to show that it holds for $h=m_k$. Since $p_i\mid p_k$ and $q_j\mid q_k$, it follows that $\gcd(p_i, \sigma^{m_k}q_j)$ divides $\gcd(p_k, \sigma^{m_k}q_k)$.
Actually,
\[
\gcd\left(p_k,\sigma^{m_k}q_k\right)=\gcd\left(\frac{p_{k-1}}{s_k},\frac{\sigma^{m_k}q_{k-1}}{s_k}\right)=1.
\]
Hence $\gcd(p_i, \sigma^{m_k}q_j)=1$, and this concludes the proof.
\end{proof}

Obviously, setting $i=j=k=N$ in (\ref{eq-3-05}) we see that $\gcd(\overline{A}(\alpha,\beta), \sigma^h \overline{B}(\alpha,\beta))=1$ for all $h\in\mathbb{N}$. Moreover, we have the following result.

\begin{theorem}\label{thm-3-1}
Let $r(\alpha,\beta)\in\mathbb{F}(\alpha,\beta)$ be a nonzero rational function. Then there exist polynomials $A(\alpha,\beta), B(\alpha,\beta), C(\alpha,\beta)\in\mathbb{F}[\alpha,\beta]$ such that
\begin{align}\label{eq-3-06}
r(\alpha,\beta)=\frac{A(\alpha,\beta)}{B(\alpha,\beta)}\cdot
\frac{\sigma \left(C(\alpha,\beta)\right)}{C(\alpha,\beta)},
\end{align}
where
\begin{enumerate}
\item[$(i)$] $\deg\left(\gcd\left(\overline{A}(\alpha,\beta), \sigma^h\left(\overline{B}(\alpha,\beta)\right)\right)\right)=0$ for every nonnegative integer $h$,

\item[$(ii)$] $\deg\left(\gcd(A(\alpha,\beta), C(\alpha,\beta))\right)=0$,

\item[$(iii)$] $\deg\left(\gcd(B(\alpha,\beta), \sigma \left(C(\alpha,\beta)\right)\right)=0$.
\end{enumerate}
\end{theorem}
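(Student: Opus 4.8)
The plan is to combine Algorithm~\ref{alg-2} with a further rational reduction step that separates the parts of $A$ and $C$ (and of $B$ and $\sigma(C)$) that share a common factor. First I would invoke Theorem~\ref{thm-prime} and Proposition~\ref{pro-3-1}: writing $r=p/q$ with $\gcd(p,q)=1$, the spread $\Spr_\sigma(\overline p,\overline q)$ is finite, so Algorithm~\ref{alg-2} terminates and produces $A,B,C$ with $r = (A/B)(\sigma(C)/C)$ satisfying $(i)$, via the telescoping computation already displayed in the text (this gives $\overline A = p_N$, $\overline B = q_N$, $C=\overline C$, and setting $i=j=k=N$ in \eqref{eq-3-05} yields $(i)$). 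So the only new content is arranging $(ii)$ and $(iii)$.

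To get $(ii)$, I would argue that $\gcd(A,C)$ can be cancelled without destroying $(i)$. Suppose $d = \gcd(A,C)$ is nontrivial. Since $r = (A/B)(\sigma(C)/C)$, any irreducible factor $\pi \mid d$ appears in $A$ and in $C$; replace $A$ by $A/d$ and $C$ by $C/d$ and compensate by multiplying $B$ by $\sigma(d)/d\cdot$(appropriate bookkeeping) — more carefully, one repeats the ``shift-collection'' idea: replace the pair $(A,C)$ by $(A/d,\, C/d)$ and the factor $\sigma(C)$ accordingly, pushing the discrepancy into $B$. Because $C=\overline C$ has finite dispersion, the chain of shifts $\pi, \sigma\pi, \sigma^2\pi,\dots$ meeting $A$ terminates, so finitely many such cancellations suffice. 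One must check that this does not reintroduce a common factor between $\overline A$ and some $\sigma^h(\overline B)$: here I would use that the factors moved into $B$ are shifts of factors of $\overline C$, which have finite dispersion, hence lie in the finite part, and that Proposition~\ref{pro-3-1}'s argument (divisibility into $\gcd(p,\sigma^h q)$) is preserved since we never enlarge $\overline A$. Symmetrically, $(iii)$ follows by the same device applied to $\gcd(B,\sigma(C))$: replace $B$ by $B/\gcd(B,\sigma(C))$ and cancel the matching factor from $\sigma(C)$, i.e. from $C$ after an inverse shift, again terminating because $C$ has finite dispersion.

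The cleanest way to organize the write-up is probably to first run Algorithm~\ref{alg-2} to secure $(i)$, then normalize: among all factorizations $r=(A/B)(\sigma C/C)$ satisfying $(i)$, choose one with $\deg C$ minimal. I would then show minimality of $\deg C$ forces both $(ii)$ and $(iii)$ — if $\gcd(A,C)$ or $\gcd(B,\sigma C)$ were nontrivial, a cancellation as above would strictly decrease $\deg C$ while preserving $(i)$, a contradiction. This reduces the whole proof to the single lemma that such a cancellation preserves $(i)$ and strictly drops $\deg C$.

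The main obstacle I anticipate is precisely verifying that the cancellation step preserves condition~$(i)$ — in particular that the factors displaced into $B$ during the $\gcd(A,C)$-reduction are genuinely part of the \emph{finite} part $\overline B$ and cannot collide with $\overline A$ under any shift $\sigma^h$. This is where one needs Theorem~\ref{thm-dis}(iii) to identify which factors have finite dispersion, and a careful tracking of which polynomial each shifted factor lands in, analogous to the bookkeeping in the displayed telescoping identity after Algorithm~\ref{alg-2}. Everything else is routine gcd manipulation and a minimal-degree (well-ordering) argument.
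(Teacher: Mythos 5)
Your treatment of part $(i)$ matches the paper: run Algorithm \ref{alg-2} (termination guaranteed by Theorem \ref{thm-prime}) and apply Proposition \ref{pro-3-1} with $i=j=k=N$. For $(ii)$ and $(iii)$, however, your cancellation/minimal-$\deg C$ scheme has a genuine gap, and its bookkeeping is wrong in a way that matters. If $d=\gcd(A,C)$ is cancelled, the identity $r=(A/B)\cdot\sigma(C)/C$ forces the compensating factor into the \emph{numerator}: the new triple is $A'=(A/d)\,\sigma(d)$, $B'=B$, $C'=C/d$ (multiplying $B$ by $\sigma(d)/d$ does not preserve $r$ and need not even be a polynomial); symmetrically, for $(iii)$ with $d=\gcd(B,\sigma(C))$ the new denominator is $B'=(B/d)\,\sigma^{-1}(d)$. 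So $\overline{A}$ (resp.\ $\overline{B}$) \emph{is} enlarged, contrary to your claim that ``we never enlarge $\overline{A}$,'' and the enlarged factor can collide with the other side at shift $h=0$: condition $(i)$ only controls $\gcd(\overline{A},\sigma^h(\overline{B}))$ for $h\geq 0$, so nothing prevents an irreducible $\pi$ dividing $\gcd(A,C)$ from also satisfying $\sigma(\pi)\mid\overline{B}$; in that case $\gcd(\overline{A'},\overline{B'})$ is non-constant and $(i)$ fails for the new triple. Hence the key lemma of your minimality argument (``cancellation preserves $(i)$'') is false as stated. It can be repaired—after the cancellation, divide the new numerator and denominator by their gcd (this does not touch $C'$, and one checks that the only possible violation of $(i)$ was at $h=0$, while shifts $h\geq 1$ are inherited from the old triple)—but that verification is exactly the content you leave open, and you flag it yourself as the unproved obstacle.

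The paper needs no second normalization pass at all: it proves that the triple produced by Algorithm \ref{alg-2} already satisfies $(ii)$ and $(iii)$. A non-constant common factor of $\overline{A}=p_N$ and $C$ would be a common factor of $p_N$ and some $\sigma^{-j}s_i$ with $1\leq j\leq m_i$; since $\sigma^{m_i-j}q_{i-1}=\sigma^{m_i-j}q_i\cdot\sigma^{-j}s_i$, it would then be a common factor of $p_N$ and $\sigma^{m_i-j}q_{i-1}$ with $m_i-j<m_i$, contradicting Proposition \ref{pro-3-1}; the symmetric argument via $\sigma^{-j}p_{i-1}=\sigma^{-j}p_i\cdot\sigma^{-j}s_i$ gives $(iii)$, and the infinite parts $A_\infty=p_\infty$, $B_\infty=q_\infty$ are automatically coprime to $C$ and $\sigma(C)$ because every irreducible factor of $C$ has finite dispersion. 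If you want to keep the minimal-$\deg C$ framing, you must prove the repaired cancellation lemma; otherwise the direct tracing argument is shorter and follows immediately from Proposition \ref{pro-3-1}.
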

\begin{proof}
The polynomials $A(\alpha,\beta), B(\alpha,\beta), C(\alpha,\beta)$ are constructed by Step 2 of the Algorithm. We have already shown that (\ref{eq-3-06}) and $(i)$ are satisfied.

$(ii)$ By the definition of the splitting factorization,  $A_{\infty}(\alpha,\beta)$ and $C(\alpha,\beta)$ are relatively prime. If $\overline{A}(\alpha,\beta)$ and $C(\alpha,\beta)$ have a non-constant common factor then so do $p_N(\alpha,\beta)$ and
$\sigma^{-j}s_i(\alpha,\beta)$, for some $i$ and $j$ such that $1\leq i \leq N$ and $1 \leq j \leq m_i$.
Since by definition $\sigma^{m_i-j}q_{i-1}(\alpha,\beta)=\sigma^{m_i-j}q_i(\alpha,\beta)\sigma^{-j}s_i(\alpha,\beta)$, it follows that $p_N(\alpha,\beta)$ and
$\sigma^{m_i-j}q_{i-1}(\alpha,\beta)$ have such a common factor, too. Since $m_i-j<m_i$, this contradicts with Proposition \ref{pro-3-1}. Hence $A(\alpha,\beta)$ and $C(\alpha,\beta)$ are relatively prime.

$(iii)$ Obviously, the finite part of $\sigma \left(C(\alpha,\beta)\right)$ is itself, then by the definition of the splitting factorization, $B_{\infty}(\alpha,\beta)$ and $\sigma \left(C(\alpha,\beta)\right)$ are relatively prime. If $\overline{B}(\alpha,\beta)$ and $\sigma \left(C(\alpha,\beta)\right)$ have a non-constant common factor then so do $q_N(\alpha,\beta)$ and $\sigma^{-j}s_i(\alpha,\beta)$, for some $i$ and $j$ such that $1\leq i \leq N$ and $0\leq j\leq m_i-1$.
Since by definition $\sigma^{-j}p_{i-1}(\alpha,\beta)=\sigma^{-j} p_i(\alpha,\beta)\cdot\sigma^{-j}s_i(\alpha,\beta)$, it follows that $\sigma^{j}q_N(\alpha,\beta)$ and $p_{i-1}(\alpha,\beta)$ have
such a common factor, too. Since $j<m_i$, this contradicts Proposition \ref{pro-3-1}. Hence $B(\alpha,\beta)$ and $\sigma \left(C(\alpha,\beta)\right)$ are relatively prime.
\end{proof}
\begin{example}\label{example-3-0}
For the difference field $\mathbb{R}(\alpha,\beta)$ involving the Fibonacci numbers $F_n$ which is defined in Example \ref{example-2-1}, consider the difference equation (\ref{eq-3-01}) with $a=1,b=-1$.
Given $f(\alpha, \beta)=\frac{\alpha}{\beta(\alpha+\beta)}$, find a solution $g(\alpha, \beta)\in\mathbb{F}(\alpha,\beta)$ such that
\begin{align}\label{eq-3-09}
\sigma\left(g(\alpha, \beta)\right)-g(\alpha, \beta)
=\frac{\alpha}{\beta(\alpha+\beta)}.
\end{align}

According to Step 1, compute the ratio
\[r(\alpha,\beta)=\frac{\sigma\left(f(\alpha, \beta)\right)}{f(\alpha, \beta)}=\frac{\beta^2}{\alpha(\alpha+2\beta)}:=\frac{p(\alpha, \beta)}{q(\alpha, \beta)}.\]

According to Step 2, compute that the splitting factorization of $p,q$ with respect to $\sigma$ are $p=\overline{p}$ and $q=\overline{q}$, respectively, and  $\Spr_{\sigma}(\overline{p}(\alpha, \beta),\overline{q}(\alpha, \beta))=\{1\}$, then
\[
s_1=\gcd\left(p,\sigma q\right)=\beta,\  p_1=p/s_1=\beta,\  q_1=q/\sigma^{-1} s_1=\alpha+2\beta.
\]
hence, we have
\[r(\alpha,\beta)=\frac{\beta}{\alpha+2\beta}\cdot\frac{\sigma\alpha}{\alpha}
:=\frac{A(\alpha, \beta)}{B(\alpha, \beta)}\cdot\frac{\sigma\left(C(\alpha, \beta)\right)}{C(\alpha, \beta)},\]
where $\deg\left(\gcd(A(\alpha,\beta), \sigma^h\left(B(\alpha,\beta)\right)\right)=0$ for all nonnegative integers $h$.
\qed\end{example}

\begin{example}\label{example-3-1}
For the difference field $\mathbb{R}(\alpha,\beta)$ involving the Lacus numbers $L_n$ which is defined in Example \ref{example-2-1}, consider the difference equation (\ref{eq-3-01}) with $a=b=1$.
Given $f(\alpha, \beta)=\frac{1}{(\beta-\alpha)\alpha}$, find a solution $g(\alpha, \beta)\in\mathbb{F}(\alpha,\beta)$ such that
\begin{align}\label{eq-3-12}
\sigma\left(g(\alpha, \beta)\right)+g(\alpha, \beta)
=\frac{1}{(\beta-\alpha)\alpha}.
\end{align}

According to Step 1, compute the ratio
\[r(\alpha,\beta)=\frac{\sigma\left(f(\alpha, \beta)\right)}{f(\alpha, \beta)}=\frac{\beta-\alpha}{\beta}:=\frac{p(\alpha, \beta)}{q(\alpha, \beta)}.\]

According to Step 2,  compute that the splitting factorization of $p,q$ with respect to $\sigma$ are $p=\overline{p}$ and $q=\overline{q}$, respectively, and $\Spr_{\sigma}(p(\alpha, \beta),q(\alpha, \beta))=\emptyset$, then we have
\[r(\alpha,\beta)=\frac{\beta-\alpha}{\beta}\cdot\frac{1}{1}:=\frac{A(\alpha, \beta)}{B(\alpha, \beta)}\cdot\frac{\sigma\left(C(\alpha, \beta)\right)}{C(\alpha, \beta)},\]
where $\deg\left(\gcd(A(\alpha,\beta), \sigma^h\left(B(\alpha,\beta)\right)\right)=0$ for all nonnegative integers $h$.
\qed\end{example}

\begin{example}\label{example-3-2}
For the difference field $\mathbb{R}(\alpha,\beta)$ involving the  Pell numbers $P_n$ which is defined in Example \ref{example-2-3}, consider the difference equation (\ref{eq-3-01}) with $a=1,b=-1$.
Given $f(\alpha, \beta)=\frac{\alpha}{(\beta-2\alpha)\beta}$, find a solution $g(\alpha, \beta)\in\mathbb{F}(\alpha,\beta)$ such that
\begin{align}\label{eq-3-13}
\sigma\left(g(\alpha, \beta)\right)-g(\alpha, \beta)
=\frac{\alpha}{(\beta-2\alpha)\beta}.
\end{align}

According to Step 1, compute the ratio
\[r(\alpha,\beta)=\frac{\sigma\left(f(\alpha, \beta)\right)}{f(\alpha, \beta)}=\frac{\beta^2(\beta-2\alpha)}{\alpha^2(\alpha+2\beta)}:=\frac{p(\alpha, \beta)}{q(\alpha, \beta)}.\]

According to Step 2, compute that $\Spr_{\sigma}(p(\alpha, \beta),q(\alpha, \beta))=\{1\}$, then we have
\[
s_1=\gcd\left(p,\sigma q\right)=\beta^2,\  p_1=p/s_1=\beta-2\alpha,\  q_1=q/\sigma^{-1} s_1=\alpha+2\beta.
\]
hence we have
\[r(\alpha,\beta)=\frac{\beta-2\alpha}{\alpha+2\beta}\cdot\frac{\sigma(\alpha^2)}{\alpha^2}:=
\frac{A(\alpha, \beta)}{B(\alpha, \beta)}\cdot\frac{\sigma\left(C(\alpha, \beta)\right)}{C(\alpha, \beta)},\]
where $\deg\left(\gcd(A(\alpha,\beta), \sigma^h\left(B(\alpha,\beta))\right)\right)=0$ for all nonnegative integers $h$.
\qed\end{example}

\subsection{Step 3 of Algorithm \ref{alg-1}}
In this subsection, we explain how to look for a nonzero solution $x(\alpha, \beta)\in\mathbb{F}(\alpha, \beta)$ of
\begin{align}\label{eq-3-07}
aA(\alpha,\beta)\sigma\left(x(\alpha,\beta)\right)+b\sigma^{-1}\left(B(\alpha,\beta)\right)x(\alpha,\beta)=C(\alpha,\beta).
\end{align}
Actually, (\ref{eq-3-07}) is a special case of nontrivial difference function (\ref{eq-dif}).
Suppose $x(\alpha, \beta)=\frac{p(\alpha, \beta)}{q(\alpha, \beta)}$ is a reduced rational solution of (\ref{eq-3-07}), i.e, $p(\alpha, \beta),q(\alpha, \beta)\in\mathbb{F}[\alpha, \beta]$ are relatively prime. First, we show one property of polynomial $q(\alpha, \beta)$.

\begin{theorem}\label{thm-3-2}
Let $A(\alpha,\beta), B(\alpha,\beta), C(\alpha,\beta)\in\mathbb{F}[\alpha,\beta]$ be constructed by the Step 2 of the algorithm. If $x(\alpha, \beta)=\frac{p(\alpha, \beta)}{q(\alpha, \beta)}$ is a reduced rational solution of (\ref{eq-3-07}), then $q(\alpha, \beta)$ only has the infinite part in the splitting factorization of $q$ with respect to $\sigma$.
\end{theorem}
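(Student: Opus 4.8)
The plan is to clear denominators and run an Abramov-style ``leftmost/rightmost factor'' argument, pushing everything back onto condition $(i)$ of Theorem \ref{thm-3-1}. First I would substitute $x(\alpha,\beta)=p(\alpha,\beta)/q(\alpha,\beta)$ with $\gcd(p,q)=1$ into (\ref{eq-3-07}) and multiply through by $q\,\sigma(q)$, obtaining
\[
aA\,\sigma(p)\,q + b\,\sigma^{-1}(B)\,p\,\sigma(q) = C\,q\,\sigma(q).
\]
Since $\sigma$ preserves total degree and is an automorphism (Theorem \ref{thm-2-0}), we have $\Spr_{\sigma}(\sigma^k r,\sigma^k s)=\Spr_{\sigma}(r,s)$ and hence $\Dis_{\sigma}$, as well as finiteness of dispersion, is invariant under $\sigma$; in particular the finite part of a polynomial is carried to the finite part of its shift. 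Now suppose, for contradiction, that $\overline q\notin\mathbb{F}$, and fix an irreducible factor $t$ of $\overline q$, so that $\Dis_{\sigma}(t)\in\mathbb{Z}$.

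Next I would analyze the $\sigma$-orbit of $t$. Because $\Dis_{\sigma}(t)$ is finite, only finitely many shifts $\sigma^k(t)$ divide $q$; let $t_L$ be the one with smallest such $k$, so $t_L\mid q$ but $\sigma^{-1}(t_L)\nmid q$, whence $t_L\nmid\sigma(q)$. Reducing the cleared equation modulo $t_L$ kills the first and third terms, so $t_L\mid b\,\sigma^{-1}(B)\,p\,\sigma(q)$; since $t_L\nmid p$ (coprimality), $t_L\nmid\sigma(q)$, and $t_L\nmid b$ (as $b\in\mathbb{F}\setminus\{0\}$ and $t_L\notin\mathbb{F}$), we get $t_L\mid\sigma^{-1}(B)$, i.e. $\sigma(t_L)\mid B$. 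As $\sigma(t_L)$ still has finite dispersion, it is a factor of the finite part: $\sigma(t_L)\mid\overline B$. Symmetrically, let $t_R=\sigma^{j}(t_L)$ with $j\ge 0$ be the shift of $t_L$ with largest exponent still dividing $q$, so $\sigma(t_R)\mid\sigma(q)$ but $\sigma(t_R)\nmid q$. Reducing modulo $\sigma(t_R)$ kills the second and third terms, so $\sigma(t_R)\mid aA\,\sigma(p)\,q$; since $\sigma(t_R)\nmid q$, $\sigma(t_R)\nmid\sigma(p)$ (otherwise $t_R\mid p$, contradicting coprimality), and $\sigma(t_R)\nmid a$, we conclude $\sigma(t_R)\mid A$, hence $\sigma(t_R)\mid\overline A$.

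Finally I would combine the two conclusions. We have $\sigma(t_R)=\sigma^{j+1}(t_L)\mid\overline A$, while applying $\sigma^{j}$ to $\sigma(t_L)\mid\overline B$ yields $\sigma^{j+1}(t_L)\mid\sigma^{j}(\overline B)$. Thus the polynomial $\sigma^{j+1}(t_L)$, which is non-constant since $\deg\sigma^{j+1}(t_L)=\deg t_L>0$, divides $\gcd\bigl(\overline A,\sigma^{j}(\overline B)\bigr)$ with $j\ge 0$. This contradicts condition $(i)$ of Theorem \ref{thm-3-1} (namely $\deg\gcd(\overline A,\sigma^{h}(\overline B))=0$ for all $h\in\mathbb{N}$). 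Hence $\overline q\in\mathbb{F}$, i.e. $q$ equals its infinite part $q_\infty$ up to a unit, which is the assertion.

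The routine parts are the modular reductions and the coprimality checks; the delicate points I expect to be the real obstacle are: (a) making sure that shifting by $\sigma$ genuinely preserves \emph{finiteness} of dispersion, so that $\sigma(t_L)$ lands in $\overline B$ and $\sigma(t_R)$ in $\overline A$ rather than slipping into the infinite parts — this rests on $\sigma$ preserving degree together with the shift-invariance of $\mathbb{F}[\alpha,\beta]^{\sigma^*}$ underlying Theorem \ref{thm-dis}; (b) the orbit bookkeeping, i.e. arguing that a leftmost and a rightmost shift of $t$ dividing $q$ exist and that the gap $j$ between them is nonnegative; and (c) the degenerate case $j=0$, where a single shift $\sigma(t_L)$ must divide both $\overline A$ and $\overline B$, which already contradicts the $h=0$ instance of Theorem \ref{thm-3-1}$(i)$.
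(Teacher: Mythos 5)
Your proposal is correct and follows essentially the same route as the paper's proof: clear denominators, pick extremal $\sigma$-shifts of a non-constant factor of $\overline{q}$, use coprimality and $a,b\in\mathbb{F}$ to force one shift into $\overline{A}$ and another into a shift of $\overline{B}$, and contradict condition $(i)$ of Theorem \ref{thm-3-1}. The only difference is bookkeeping: the paper takes $N$ maximal with $\deg\gcd(\overline{q},\sigma^N\overline{q})>0$ and a common irreducible divisor $u$, whereas you track the leftmost and rightmost shifts of a fixed irreducible factor $t$ of $\overline{q}$ dividing $q$; both handle the delicate point (finiteness of dispersion being preserved by $\sigma$, so the factors land in the finite parts) in the same way.
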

\begin{proof}
Let $q=q_{\infty}\overline{q}$ be the splitting factorization of $q$ with respect to $\sigma$.
Suppose $\overline{q}$ is a non-constant polynomial. Let $N$ be the largest integer such that $\deg\left(\gcd\left(\overline{q},\sigma^N(\overline{q})\right)\right)>0$. Note $N\geq0$. By (\ref{eq-3-07}), we have
\begin{align}\label{eq-3-08}
aA(\alpha,\beta)\cdot\sigma\left(p(\alpha, \beta)\right)\cdot &q(\alpha, \beta)+b\sigma^{-1}\left(B(\alpha,\beta)\right)\cdot p(\alpha, \beta)\cdot\sigma\left(q (\alpha, \beta)\right)\nonumber\\
=&C(\alpha,\beta)\cdot q(\alpha, \beta)\cdot\sigma\left(q (\alpha, \beta)\right).
\end{align}
Let $u(\alpha,\beta)$ be a non-constant irreducible common divisor
of $\overline{q}$ and $\sigma^N(\overline{q})$. Since $\sigma^{-N}\left(u(\alpha,\beta)\right)\mid \overline{q}$ and $\overline{q}\mid q$,
\[\sigma^{-N}\left(u(\alpha,\beta)\right)\mid q(\alpha,\beta),\]
and it follows from (\ref{eq-3-08}) that
\[
\sigma^{-N}\left(u(\alpha,\beta)\right)\mid b\sigma^{-1}\left(\overline{B}(\alpha,\beta)\right)\cdot p(\alpha, \beta)\cdot\sigma\left(q (\alpha, \beta)\right).
\]

Now $\sigma^{-N}\left(u(\alpha,\beta)\right)$ does not divide $p$ since it divides $q$, which is relatively prime to $p$ by assumption. It also does not divide $\sigma\left(q (\alpha, \beta)\right)$, or else $u(\alpha,\beta)$ would be a
non-constant common factor of $\overline{q}$ and $\sigma^{N+1}(\overline{q})$, contrary to our choice of $N$. Therefore $\sigma^{-N}(u(\alpha,\beta))\mid\sigma^{-1}\left(\overline{B}(\alpha,\beta)\right)$, since $b\in\mathbb{F}$, and hence $\sigma(u(\alpha,\beta))\mid\sigma^{N}\left(\overline{B}(\alpha,\beta)\right)$.

Similarly, it follows from (\ref{eq-3-08}) that
\[
\sigma\left(u(\alpha,\beta)\right)\mid a\overline{A}(\alpha,\beta)\cdot\sigma\left(p(\alpha, \beta)\right)\cdot q(\alpha, \beta).
\]
Again, $\sigma\left(u(\alpha,\beta)\right)$ does not divide $\sigma\left(p(\alpha, \beta)\right)$ by assumption. It also does not divide $q(\alpha, \beta)$, or else $u(\alpha,\beta)$ would be a non-constant common factor of $\sigma^{-1}\left(q(\alpha, \beta)\right)$ and $\sigma^{N}\left(q(\alpha, \beta)\right)$, contrary to our choice of $N$. Hence $\sigma\left(u(\alpha,\beta)\right)\mid \overline{A}(\alpha,\beta)$, since $a\in\mathbb{F}$. But then, by the previous paragraph,
$\sigma\left(u(\alpha,\beta)\right)$ is a non-constant common factor of $\overline{A}(\alpha,\beta)$ and $\sigma^{N}\left(\overline{B}(\alpha,\beta)\right)$, contrary to Theorem \ref{thm-3-1}.

Therefore, $\overline{q}$ has to be a constant polynomial.
\end{proof}

Now, we show how to find a nonzero solution $x(\alpha, \beta)$ of (\ref{eq-3-07}).

\begin{breakablealgorithm}
\label{alg-3}
\caption{Step 3 of Algorithm \ref{alg-1}}
\begin{algorithmic}[htb]
\STATE {\raggedright{\textbf{Step 3.1:} Compute the eigenvectors $X_1, X_2$ of matrix $\left(
    \begin{array}{cc}
      0 & u \\
      1 & v \\
    \end{array}
  \right)$, and $(h_1,h_2):=(\alpha,\beta)(X_1,X_2)$.}}
\STATE {\raggedright{\textbf{Step 3.2:} $m:=\max\{\deg A,\deg(\sigma^{-1}B)\}-\deg C$.}}
\IF{$m\leq0$}
  \STATE{ Find a nonzero polynomial solution $p(\alpha,\beta)$ of
\[
aA(\alpha,\beta)\sigma\left(p(\alpha,\beta)\right)
+b\sigma^{-1}\left(B(\alpha,\beta)\right)p(\alpha,\beta)=C(\alpha,\beta).
\]
If one exists, then $x(\alpha,\beta):=p(\alpha,\beta)$; otherwise return ``FALSE'' and stop.}
\ELSE
  \FOR{each $0\leq k\leq m+1$}\vspace{-5mm}
     \STATE {\FOR{$0\leq i\leq k$}
                  \STATE {$q_i(\alpha,\beta):=h_1^{k-i}h_2^{i};$}\\
                   \STATE {find a nonzero polynomial solution $p(\alpha,\beta)$ such that
\[
aA(\alpha,\beta)\cdot\sigma\left(\frac{p(\alpha,\beta)}{q_i(\alpha,\beta)}\right)
+b\sigma^{-1}\left(B(\alpha,\beta)\right)\cdot\frac{p(\alpha,\beta)}{q_i(\alpha,\beta)}=C(\alpha,\beta).
\]}\ENDFOR}
\ENDFOR
\STATE{ If there exists, then $x(\alpha,\beta):=\frac{p(\alpha,\beta)}{q_i(\alpha,\beta)}$;
otherwise return ``FALSE'' and stop.}
\ENDIF
\end{algorithmic}
\end{breakablealgorithm}
Note that return ``FALSE'' does not mean there is no nonzero solution $x(\alpha, \beta)=\frac{p(\alpha, \beta)}{q(\alpha, \beta)}$ of (\ref{eq-3-07}).
By Theorem \ref{thm-2-2} and Theorem \ref{thm-3-2}, for any solution $x(\alpha, \beta)=\frac{p(\alpha, \beta)}{q(\alpha, \beta)}$, if $q$ is a homogenous polynomial, then there exist $m\in\mathbb{N}$ and $0\leq i\leq m$ such that
\[
q(\alpha, \beta)|h_1(\alpha,\beta)^{i}h_2(\alpha,\beta)^{m-i},
\]
where $(h_1(\alpha,\beta),h_2(\alpha,\beta))=(\alpha, \beta)(X_1,X_2)$ and $X_1, X_2$ are two eigenvectors of matrix
$A=\left(
    \begin{array}{cc}
      0 & u \\
      1 & v \\
    \end{array}
  \right)$.
However we could not get the upper bound of $m$, hence we just assume that $m\leq \max\{\deg A,\sigma^{-1}\deg B\}-\deg C$, then use the method of finding a polynomial solution to compute polynomial $p(\alpha, \beta)$.

\begin{example}
Consider the solution $g(\alpha, \beta)$ in Example \ref{example-3-0}.

According to Step 3, we need to find a nonzero rational solution $x(\alpha, \beta)\in\mathbb{F}(\alpha, \beta)$ such that
\begin{align}\label{eq-3-10}
\beta\cdot\sigma\left(x(\alpha,\beta)\right)-\sigma^{-1}\left(\alpha+2\beta)\right)\cdot x(\alpha,\beta)=\alpha,
\end{align}
if one exists.

Recall that
$\left(h_1,h_2\right)=\left(\alpha+\left(\frac{1}{2}+\frac{\sqrt{5}}{2}\right)\beta,\alpha+\left(\frac{1}{2}-\frac{\sqrt{5}}{2}\right)\beta\right)$, which is computed in Example \ref{example-2-2}.

Since $m=\max\{\deg A,\sigma^{-1}\deg B\}-\deg C=1-1=0$, the problem is reduced to find a nonzero
polynomial solution of (\ref{eq-3-10}), then using the method of finding a polynomial solution, we get $x(\alpha,\beta)=-1$, hence \[g(\alpha,\beta)=\frac{\sigma^{-1}\left(B(\alpha,\beta)\right)\cdot x(\alpha,\beta)}{C(\alpha,\beta)}\cdot f(\alpha,\beta)
=-\frac{1}{\beta}\] is a solution of (\ref{eq-3-09}),
which implies
\[\sum_{n=0}^k\frac{F_n}{F_{n+1}F_{n+2}}=1-\frac{1}{F_{k+2}}.\]
Setting $k\rightarrow\infty$, we obtain
\[\sum_{n=0}^\infty\frac{F_n}{F_{n+1}F_{n+2}}=1,\]
it is a result of Brousseau \citep[Eq.(1)]{Brousseau-1969b}.
\qed
\end{example}

\begin{example}
Consider the solution $g(\alpha, \beta)$ in Example \ref{example-3-1}.

According to Step 3, we need to find a nonzero solutions $x(\alpha, \beta)\in\mathbb{F}(\alpha, \beta)$ such that
\[
(\beta-\alpha)\cdot\sigma\left(x(\alpha,\beta)\right)+\sigma^{-1}\left(\beta\right)\cdot x(\alpha,\beta)=1,
\]
if one exists.

Recall that $\left(h_1,h_2\right)=\left(\alpha+\left(\frac{1}{2}+\frac{\sqrt{5}}{2}\right)\beta,\alpha+\left(\frac{1}{2}-\frac{\sqrt{5}}{2}\right)\beta\right)$, which is computed in Example \ref{example-2-2}.

Obviously, $m=\max\{\deg A,\sigma^{-1}\deg B\}-\deg C=1-0=1>0$.
Then do $q_0(\alpha,\beta)=h_1=\alpha+\left(\frac{1}{2}+\frac{\sqrt{5}}{2}\right)\beta$, we can find that there is an invariant $p=\left(\frac{1}{2}+\frac{\sqrt{5}}{2}\right)^2$ such that
\[
(\beta-\alpha)\cdot \sigma\left(\frac{p(\alpha,\beta)}{q_0(\alpha,\beta)}\right)+\alpha\cdot \frac{p(\alpha,\beta)}{q_0(\alpha,\beta)}=1.
\]
Now we get $x(\alpha,\beta)=\frac{\left(\frac{1}{2}+\frac{\sqrt{5}}{2}\right)^2}{\alpha+\left(\frac{1}{2}+\frac{\sqrt{5}}{2}\right)\beta}$, hence
\[
g(\alpha,\beta)=\frac{\sigma^{-1}\left(B(\alpha,\beta)\right)\cdot x(\alpha,\beta)}{C(\alpha,\beta)}\cdot f(\alpha,\beta)
=\frac{\frac{3}{2}+\frac{\sqrt{5}}{2}}{(\beta-\alpha)\cdot\left(\alpha+\left(\frac{1}{2}+\frac{\sqrt{5}}{2}\right)\beta\right)}
\]
is a solution of (\ref{eq-3-12}), which implies
\[
\sum_{n=1}^k\frac{(-1)^{n-1}}{L_{n-1}L_{n}}=(-1)^{k-1}\frac{\frac{1}{2}+\frac{\sqrt{5}}{2}}{L_k\cdot\left(L_k+\left(\frac{1}{2}+\frac{\sqrt{5}}{2}\right)L_{k+1}\right)}+\frac{\sqrt{5}}{10}.\]
Setting $k\rightarrow\infty$, we obtain
\[
\sum_{n=1}^\infty\frac{(-1)^{n-1}}{L_{n-1}L_{n}}=\frac{\sqrt{5}}{10},
\]
it is a result of Brousseau \citep[Eq.(3)]{Brousseau-1969b}.
\qed\end{example}

\begin{example}
Consider the solution $g(\alpha, \beta)$ in Example \ref{example-3-2}.

According to Step 3, looking for nonzero solutions $x(\alpha, \beta)\in\mathbb{F}(\alpha, \beta)$ of
\begin{align}\label{eq-3-14}
(\beta-\alpha)\cdot\sigma\left(x(\alpha,\beta)\right)-\sigma^{-1}\left(\alpha+2\beta\right)\cdot x(\alpha,\beta)=\alpha^2.
\end{align}

Recall that $\left(h_1,h_2\right)=\left(\alpha+(1+\sqrt{5})\beta, \alpha+(1-\sqrt{5})\beta\right)$, which is computed in Example \ref{example-2-4}.

Since $m=\max\{\deg A,\sigma^{-1}\deg B\}-\deg C=1-2=-1<0$, the problem is reduced to find a nonzero polynomial solution of (\ref{eq-3-14}), then using the method of finding a polynomial solution mentioned, we get $x(\alpha,\beta)=\frac{1}{2}\left(\alpha-\beta\right)$, hence
\[
g(\alpha,\beta)=\frac{\sigma^{-1}\left(B(\alpha,\beta)\right)\cdot x(\alpha,\beta)}{C(\alpha,\beta)}\cdot f(\alpha,\beta)
=\frac{1}{2}\frac{\alpha-\beta}{\alpha(\beta-2\alpha)}
\]
is a solution of (\ref{eq-3-13}), which implies
\[
\sum_{n=2}^k\frac{P_n}{P_{n-1}P_{n+1}}=\frac{1}{2}\left(\frac{1}{P_{k+1}}+\frac{1}{P_k}-\frac{3}{2}\right).
\]
Setting $k\rightarrow\infty$, we obtain
\[
\sum_{n=2}^\infty\frac{P_n}{P_{n-1}P_{n+1}}=-\frac{3}{4},
\]
it is a result of Koshy \citep[Example 10.3]{Koshy-2014}.
\qed\end{example}

\section{Nontrivial Case}\label{sec-4}
In this section, for the bivariate difference field $(\mathbb{F}(\alpha,\beta),\sigma)$ with  $\sigma|_\mathbb{F}={\rm id}$ and the ratio $\lambda_1/\lambda_2\in\mathbb{F}$ of eigenvalues of matrix $\left(
    \begin{array}{cc}
      0 & u \\
      1 & v \\
    \end{array}
  \right)$ is not root of unity, we consider the case $a, b\in\mathbb{F}(\alpha,\beta)$ in the difference equation (\ref{eq-dif}). We will give an algorithm for finding the solution and show some examples.

Actually, we only need consider the problem: For given polynomials $a(\alpha, \beta)$, $b(\alpha, \beta)$, $f(\alpha, \beta)\in\mathbb{F}[\alpha, \beta]$, find a rational function $g\in\mathbb{F}(\alpha, \beta)$  such that
\begin{align}\label{eq-nontrivial}
a(\alpha, \beta)\cdot\sigma\left(g(\alpha, \beta)\right)+b(\alpha, \beta)\cdot g(\alpha, \beta)=f(\alpha, \beta).
\end{align}
The steps of the algorithm are:
\begin{algorithm}
\caption{Compute a Solution of Nontrivial Case}
\label{alg-4}
\begin{algorithmic}[htb]
\STATE{\textbf{Input:} Parameters $u,v$ of the bivariate difference field $(\mathbb{F}(\alpha,\beta),\sigma)$ and polynomials $a(\alpha, \beta), b(\alpha, \beta), f(\alpha, \beta)$.}
\STATE{\textbf{Output:}  A rational function $g(\alpha, \beta)$ satisfying (\ref{eq-nontrivial}), if one exists; FALSE, otherwise.}
\STATE{\textbf{Step 1:} Find the finite part $\overline{q}(\alpha, \beta)$ in the splitting factorization of an universal denominator $q(\alpha, \beta)\in\mathbb{F}[\alpha, \beta]$ if one exists;
otherwise return ``FALSE" and stop.}
\STATE{\textbf{Step 2:} Find the infinite part $q_{\infty}(\alpha, \beta)$ in the splitting factorization of an universal denominator $q(\alpha, \beta)\in\mathbb{F}[\alpha, \beta]$ if one exists;
otherwise return ``FALSE" and stop.}
\raggedright{\STATE{\textbf{Step 3:} Find a nonzero polynomial solution $p(\alpha, \beta)\in\mathbb{F}[\alpha, \beta]$ such that $a(\alpha,\beta)\sigma\left(\frac{p(\alpha,\beta)}{q(\alpha, \beta)}\right)-b(\alpha,\beta)\frac{p(\alpha,\beta)}{q(\alpha, \beta)}=f(\alpha,\beta)$  if one exists;
otherwise return ``FALSE" and stop.}}
\STATE{\textbf{Step 4:} Return ``$\frac{p(\alpha, \ \beta)}{q(\alpha,\ \beta)}$'' and stop.}
  \end{algorithmic}
\end{algorithm}

Note that the unanswered steps are the Step 1 and Step 2, in which the problem of solving for rational solutions is reduced to the problem of solving for polynomial solutions, then for the Step 3, we can use the method of finding a polynomial solution mentioned in \citep{Guan-2021} to solve it.

In order to find a universal denominator of (\ref{eq-nontrivial}), we give a lemma first.
\begin{lemma}\label{lem-untrivial}
Let $u_0,v_0,u_1,v_1\in\mathbb{F}[\alpha, \beta]$ be polynomials satisfying $\gcd(u_0,v_0)=\gcd(u_1,$ $v_1)=1$.
If $\frac{u_0}{v_0}+\frac{u_1}{v_1}\in\mathbb{F}[\alpha,\beta]$, then $\frac{v_0}{v_1}\in\mathbb{F}$.
\end{lemma}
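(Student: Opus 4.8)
The plan is to argue by contradiction using unique factorization in $\mathbb{F}[\alpha,\beta]$, exactly as one does for the univariate partial-fraction uniqueness statement. Write $h = \frac{u_0}{v_0} + \frac{u_1}{v_1}$ and assume $h \in \mathbb{F}[\alpha,\beta]$. Clearing denominators gives
\[
u_0 v_1 + u_1 v_0 = h\, v_0 v_1,
\]
an identity in $\mathbb{F}[\alpha,\beta]$. The strategy is to show that every irreducible factor of $v_0$ is (up to a unit in $\mathbb{F}$) also an irreducible factor of $v_1$ with at least the same multiplicity, and vice versa; since $\mathbb{F}[\alpha,\beta]$ is a UFD this forces $v_0$ and $v_1$ to be associates, i.e. $v_0/v_1 \in \mathbb{F}$.

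First I would fix an irreducible $\pi \in \mathbb{F}[\alpha,\beta]$ with $\pi^e \,\|\, v_0$ (exact power, $e \ge 1$) and show $\pi^e \mid v_1$. From $u_0 v_1 = h v_0 v_1 - u_1 v_0$ we see $\pi \mid u_0 v_1$; since $\gcd(u_0,v_0)=1$ and $\pi \mid v_0$, $\pi$ does not divide $u_0$, hence $\pi \mid v_1$. Now take the highest power: suppose $\pi^k \,\|\, v_1$ with $k < e$. Looking at the $\pi$-adic valuation of each term of $u_0 v_1 + u_1 v_0 = h v_0 v_1$: the term $u_0 v_1$ has valuation exactly $k$ (again because $\pi \nmid u_0$), the term $u_1 v_0$ has valuation $\ge e > k$ (because $\pi \nmid u_1$, so its valuation is exactly $e > k$), and the right-hand side $h v_0 v_1$ has valuation $\ge e + k > k$. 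But then the left-hand side has valuation exactly $\min(k, e) = k$ while the right-hand side has valuation $> k$, a contradiction. Hence $k \ge e$, i.e. $\pi^e \mid v_1$. By symmetry, every prime power dividing $v_1$ divides $v_0$ to at least the same order, so $v_0$ and $v_1$ have identical prime factorizations up to units, giving $v_0 = c\, v_1$ for some $c \in \mathbb{F}\setminus\{0\}$, as claimed. (A cleaner packaging of the same computation: since $\gcd(u_0,v_0)=1$, the identity $u_0 v_1 + u_1 v_0 = h v_0 v_1$ shows $v_0 \mid u_0 v_1$, hence $v_0 \mid v_1$; symmetrically $v_1 \mid v_0$; therefore they are associates.)

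The only place that needs a little care — and the step I expect to be the main obstacle, though it is mild — is justifying the divisibility $v_0 \mid v_1$ rigorously from $v_0 \mid u_0 v_1$ together with $\gcd(u_0,v_0)=1$; this is Euclid's lemma in the UFD $\mathbb{F}[\alpha,\beta]$ and is standard, but one must make sure that "$\mathbb{F}$ is a field" (or at least a UFD, as assumed for the analogous Theorem~\ref{thm-dis}) is what licenses it. No feature of the difference operator $\sigma$ or of the bivariate structure beyond $\mathbb{F}[\alpha,\beta]$ being a UFD enters the argument, so the proof is short.
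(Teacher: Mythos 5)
Your proof is correct and is essentially the same elementary argument as the paper's: clear denominators to get $u_0v_1+u_1v_0=h\,v_0v_1$ and apply Euclid's lemma in the UFD $\mathbb{F}[\alpha,\beta]$ using $\gcd(u_0,v_0)=\gcd(u_1,v_1)=1$. The only cosmetic difference is that you conclude via mutual divisibility $v_0\mid v_1$ and $v_1\mid v_0$ (your ``cleaner packaging'', which makes the preceding valuation computation redundant), whereas the paper first divides out $g=\gcd(v_0,v_1)$ and shows $v_0/g,\,v_1/g\in\mathbb{F}$.
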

\begin{proof}
Let $g=\gcd(v_0,v_1)$, then
$$\frac{u_0}{v_0}+\frac{u_1}{v_1}=\frac{u_0v_1+u_1v_0}{v_0v_1}=\frac{u_0v_1/g+u_1v_0/g}{v_0v_1/g}.$$

Since $\gcd(u_0v_1/g+u_1v_0/g,v_0/g)=\gcd(u_0v_1/g,v_0/g)$ and $\gcd(u_0,v_0)=1$, $\gcd(u_0v_1/g$ $+u_1v_0/g, v_0/g)=1$. Then by $\frac{u_0}{v_0}+\frac{u_1}{v_1}\in\mathbb{F}[\alpha,\beta]$, we have $v_0/g\in\mathbb{F}$.
Similarly, $v_1/g\in\mathbb{F}$. Hence $\frac{v_0}{v_1}=\frac{v_0}{g}\frac{g}{v_1}\in \mathbb{F}.$
\end{proof}

Now, suppose $\frac{p(\alpha,\beta)}{q(\alpha,\beta)}$ is a rational solution to (\ref{eq-nontrivial}), where $p,q\in\mathbb{F}[\alpha,\beta]$ are relatively prime, and let $q=q_{\infty}\overline{q}$ be the splitting factorization of $q$ with respect to $\sigma$. We will show how to find the finite part $\overline{q}\in\mathbb{F}[\alpha,\beta]$ and the infinite part $q_{\infty}\in\mathbb{F}[\alpha,\beta]$ in following subsections, respectively.

\subsection{Step 1 of Algorithm \ref{alg-4}}
In this subsection, we consider the finite part $\overline{q}\in\mathbb{F}[\alpha,\beta]$ in the splitting factorization of $q$ with respect to $\sigma$ such that $\frac{p(\alpha,\beta)}{q(\alpha,\beta)}$ is a rational solution to (\ref{eq-nontrivial}), where $p,q\in\mathbb{F}[\alpha,\beta]$ are relatively prime.

Let $u:=\gcd(a,\sigma q)$ and $v:=\gcd(b,q)$, then by (\ref{eq-nontrivial}), we have
\[
\frac{(a/u)\sigma p}{\sigma q/u}+\frac{(b/v)p}{q/v}\in\mathbb{F}[\alpha,\beta].
\]
Since $p,q$ are relatively prime, $\gcd((a/u)\sigma p, \sigma q/u)=\gcd((b/v)p, q/v)$=1.
Lemma \ref{lem-untrivial} implies that $\sigma q/u$ and $q/v$ are the same up to a scalar multiple, so suppose
\begin{align}\label{eq-4-1}
\frac{\sigma q/u}{q/v}=k
\end{align}
where $k\in\mathbb{F}$. Recall that $q=q_{\infty}\overline{q}$ is the splitting factorization of $q$ with respect to $\sigma$, then (\ref{eq-4-1}) could be written as
\[
\sigma q_{\infty} \cdot \sigma \overline{q}\cdot v=k\cdot q_{\infty}\cdot \overline{q}\cdot u,
\]
it shows
\begin{align}\label{eq-4-2}
  q_{\infty} \cdot \overline{q}\mid\sigma^{-1}q_{\infty} \cdot \sigma^{-1}\overline{q}\cdot\sigma^{-1}u, \nonumber\\
 q_{\infty}\cdot\overline{q}\mid\sigma q_{\infty} \cdot \sigma \overline{q}\cdot v .
\end{align}

Obviously, there exists $M\in$ $\Spr_{\sigma}(q_{\infty})\backslash$ $\Spr_{\sigma}(\overline{q})$. Using the relations (\ref{eq-4-2}) repeatedly, one finds that
\begin{align*}
 q_{\infty}\cdot\overline{q}\mid\sigma^{-M} q_{\infty}\cdot\sigma^{-M}\overline{q}\cdot \sigma^{-1}u \cdot \sigma^{-2}u \cdots \sigma^{-M}u, \nonumber\\
q_{\infty}\cdot \overline{q}\mid\sigma^M q_{\infty} \cdot \sigma^M \overline{q}\cdot \sigma^{M-1}v \cdot \sigma^{M-2}v \cdots v ,
 \end{align*}
moreover by the choice of $M$, we have $\overline{q}\nmid\sigma^M\overline{q}$, $\overline{q}\nmid\sigma^{-M}\overline{q}$ and $\sigma^Mq_{\infty}=\lambda q_{\infty}$ where $\lambda\in\mathbb{F}$, so
\begin{align*}
\overline{q}\mid\sigma^{-1}u \cdot \sigma^{-2}u \cdots \sigma^{-M}u,\nonumber\\
\overline{q}\mid\sigma^{M-1}v \cdot \sigma^{M-2}v \cdots v.
\end{align*}
Let $u=u_{\infty}\overline{u}$ and $v=v_{\infty}\overline{v}$ be the splitting factorization of $u,v$ with respect to $\sigma$, respectively. Then by the definition of splitting factorization, we have
 \begin{align*}
\overline{q}\mid\sigma^{-1}\overline{u} \cdot \sigma^{-2}\overline{u} \cdots \sigma^{-M}\overline{u}, \nonumber\\
\overline{q}\mid\sigma^{M-1}\overline{v} \cdot\sigma^{M-2}\overline{v} \cdots \overline{v}.
\end{align*}
Let $a=a_{\infty}\overline{a}$ and $b=b_{\infty}\overline{b}$ are the splitting factorization of $a,b$ with respect to $\sigma$, respectively. Then by the definition of $u,v$, we have
\begin{align*}
\overline{q}\mid\sigma^{-1}\overline{a} \cdot \sigma^{-2}\overline{a} \cdots \sigma^{-M}\overline{a}, \nonumber\\
\overline{q}\mid\sigma^{M-1}\overline{b} \cdot \sigma^{M-2}\overline{b} \cdots \overline{b}.
\end{align*}
Hence
\[
\overline{q}\mid\gcd(\sigma^{-1}\overline{a} \cdot \sigma^{-2}\overline{a} \cdots \sigma^{-M}\overline{a},\  \sigma^{M-1}\overline{b} \cdot \sigma^{M-2}\overline{b} \cdots \overline{b}).
\]
Moreover, using the definition of $\gcd$, we have
\[
\gcd(\sigma^{-1}\overline{a} \cdot \sigma^{-2}\overline{a} \cdots \sigma^{-M}\overline{a},\ \sigma^{M-1}\overline{b} \cdot \sigma^{M-2}\overline{b} \cdots \overline{b})\mid
\prod_{i=1}^M\prod_{j=0}^{M-1}\gcd(\sigma^{-i}\overline{a},\ \sigma^{j}\overline{b}),
\]
then
\[
\overline{q}\mid \prod_{i=1}^M\prod_{j=0}^{M-1}\sigma^{-i}\left(\gcd(\overline{a},\ \sigma^{i+j}\overline{b})\right).
\]
By Theorem \ref{thm-prime}, we have $\Spr_{\sigma}(\overline{a},\overline{b})$ is finite.
Suppose $\Spr_{\sigma}(\overline{a},\overline{b})=\{m_1,m_2,\ldots,m_N\}$ is the spread set of $\overline{a}$ and $\overline{b}$ with respect to $\sigma$, then by the definition of spread, we have
\[
 \prod_{i=1}^M\prod_{j=0}^{M-1}\sigma^{-i}\left(\gcd(\overline{a},\ \sigma^{i+j}\overline{b})\right)\mid
 \prod_{k=1}^N\prod_{i=1}^{m_k}\sigma^{-i}\left(\gcd(\overline{a},\ \sigma^{m_k}\overline{b})\right),
\]
and
\begin{align}\label{eq-4-3}
\overline{q}\mid \prod_{k=1}^N\prod_{i=1}^{m_k}\sigma^{-i}\left(\gcd(\overline{a},\ \sigma^{m_k}\overline{b})\right).
\end{align}

Next, we will show a polynomial which could be divisible by $\overline{q}$, and its total degree is less than or equal to $\deg\left(\prod_{k=1}^N\prod_{i=1}^{m_k}\sigma^{-i}\left(\gcd(\overline{a},\ \sigma^{m_k}\overline{b})\right)\right)$. For this, we do the following computations.
\begin{breakablealgorithm}
\caption{Step 1 of Algorithm \ref{alg-4}}
\label{alg-5}
\raggedright{\textbf{Step 1.1:} Let $a=a_{\infty}\overline{a}$ and $b=b_{\infty}\overline{b}$ be the splitting factorization of $a,b$ with respect to $\sigma$, respectively.}\\
\raggedright{\textbf{Step 1.2:} Compute $\Spr_{\sigma}(\overline{a},\overline{b})=\{m_1,m_2,\ldots,m_N\}$ and $m_1<m_2<\ldots<m_N$.}\\
\raggedright{\textbf{Step 1.3:} $a_0:=\overline{a}, b_0:=\overline{b}$;}
\begin{algorithmic}[htb]
 \FOR{$1\leq i \leq N$}
  \STATE{$s_i:=\gcd(a_{i-1},\sigma^{m_i}b_{i-1});\ a_{i}:=\frac{a_{i-1}}{s_i};\ b_{i}=\frac{b_{i-1}}{\sigma^{-m_i}s_i}.$}
\ENDFOR
\STATE{$\overline{q}(\alpha,\beta):=\prod_{i=1}^N\prod_{j=1}^{m_i}\sigma^{-j}s_i.$}
\end{algorithmic}
\end{breakablealgorithm}

Using these notations, we have the following lemma:

\begin{lemma}\label{lem-4-1}
Let $a=a_{\infty}\overline{a}$ and $b=b_{\infty}\overline{b}$ be the splitting factorization of $a,b$ with respect to $\sigma$, respectively, $\Spr_{\sigma}(\overline{a},\overline{b})=\{m_1,m_2,\ldots,m_N\}$ and $m_1<m_2<\ldots<m_N$.
For each $1\leq k\leq N$,
\[
\gcd(a_{k-1},\sigma^{m_k}b_{k-1})=\gcd(\overline{a}, \sigma^{m_k}\overline{b}),
\]
where $a_i, b_i$ are defined in the Step 1.3.

\begin{proof}
It is easy to check that for each $1\leq k\leq N$,
$\gcd(a_{k-1},\sigma^{m_{k}}b_{k-1})\mid\gcd(\overline{a}, \sigma^{m_{k}}\overline{b})$.

On the other hand, by Algorithm \ref{alg-5}, we have $\overline{a}=a_{k-1}\prod_{i=1}^{k-1}s_i$ and $\overline{b}=b_{k-1}\prod_{i=1}^{k-1}\sigma^{-m_i}s_i$ for each $1\leq k\leq N$, then
\begin{align*}
\gcd(\overline{a},\sigma^{m_{k}}\overline{b})\mid &\gcd\left(a_{k-1}, \sigma^{m_{k}}b_{k-1}\right) \cdot
\gcd\left(a_{k-1}, \prod_{i=1}^{k-1}\sigma^{m_{k}-m_i}s_i\right) \\
&\cdot \gcd\left(\prod_{i=1}^{k-1}s_i, \sigma^{m_{k}}b_{k-1}\right)
 \cdot \gcd\left(\prod_{i=1}^{k-1}s_i, \prod_{i=1}^{k-1}\sigma^{m_{k}-m_i}s_i\right)  .
\end{align*}

Now, we only need to show that $\deg\left( \gcd\left(a_{k-1}, \prod_{i=1}^{k-1}\sigma^{m_{k}-m_i}s_i\right)\right)$ =$\deg\left(\gcd\left(\prod_{i=1}^{k-1}s_i, \sigma^{m_{k}}b_{k-1}\right)\right)$ =$\deg\left(\gcd\left(\prod_{i=1}^{k-1}s_i, \prod_{i=1}^{k-1}\sigma^{m_{k}-m_i}s_i\right)\right)=0$.

Suppose $\gcd\left(a_{k-1}, \prod_{i=1}^{k-1}\sigma^{m_{k}-m_i}s_i\right)=t$, where $\deg t\neq0$, then there exist polynomials $d_1,d_2\in\mathbb{F}[\alpha,\beta]$ such that
$a_{k-1}=t\cdot d_1$ and
$\prod_{i=1}^{k-1}\sigma^{m_{k}-m_i}s_i=t\cdot d_2$.
Moreover, we have
\[
\overline{a}=a_{k-1}\prod_{i=1}^{k-1}s_i=t\cdot d_1\cdot\prod_{i=1}^{k-1}s_i,
\]
and
\[
\sigma^{m_N+m_{k}}\overline{b}=\sigma^{m_N+m_{k}}b_{k-1}\cdot\prod_{i=1}^{k-1}\sigma^{m_N+m_{k}-m_i}s_i=\sigma^{m_N+m_{k}}b_k\cdot t\cdot d_2\cdot\prod_{i=1}^{k-1}\sigma^{m_N}s_i.
\]
Hence $t\mid\gcd(\overline{a},\sigma^{m_N+m_{k}}\overline{b})$,
which implies $m_N+m_{k}\in\Spr_{\sigma}(\overline{a},\overline{b})$, a contradiction.

Next, suppose $\gcd\left(\prod_{i=1}^{k-1}s_i, \sigma^{m_{k}}b_{k-1}\right)=t$, where $\deg t\neq0$, then there exist polynomials $d_1,d_2\in\mathbb{F}[\alpha,\beta]$ such that
$\prod_{i=1}^{k-1}s_i=t\cdot d_1$ and $\sigma^{m_{k}}b_{k-1}=t\cdot d_2$.
Moreover, we have
\[\overline{a}=a_{k-1}\prod_{i=1}^{k-1}s_i=a_{k-1}\cdot t\cdot d_1\]
and
\[\sigma^{m_N+m_{k}}\overline{b}=\sigma^{m_N}\overline{b}\cdot\sigma^{m_{k}}b_{k-1}\cdot\prod_{i=1}^{k-1}\sigma^{m_{k}-m_i}s_i=\sigma^{m_N}\overline{b}\cdot t\cdot d_2\cdot\prod_{i=1}^{k-1}\sigma^{m_{k}-m_i}s_i.
\]
Hence $t\mid\gcd(\overline{a},\sigma^{m_N+m_{k}}\overline{b})$, which implies $m_N+m_{k}\in\Spr_{\sigma}(\overline{a},\overline{b})$, a contradiction.

Finally, suppose $\gcd\left(\prod_{i=1}^{k-1}s_i, \prod_{i=1}^{k-1}\sigma^{m_{k}-m_i}s_i\right)=t$, where $\deg t\neq0$, then there exist polynomials $d_1,d_2\in\mathbb{F}[\alpha,\beta]$ such that
$\prod_{i=1}^{k-1}s_i=t\cdot d_1$ and $\prod_{i=1}^{k-1}\sigma^{m_{k}-m_i}s_i=t\cdot d_2$.
Moreover, we have
\[
\overline{a}=a_{k-1}\prod_{i=1}^{k-1}s_i=a_{k-1}\cdot t\cdot d_1
\]
and
\[
\sigma^{m_N+m_{k}}\overline{b}=\sigma^{m_N+m_{k}}b_k\cdot\prod_{i=1}^{k-1}\sigma^{m_N+m_{k}-m_i}s_i=\sigma^{m_N+m_{k}}b_k\cdot t\cdot d_2\cdot\prod_{i=1}^{k-1}\sigma^{m_N}s_i.
\]
Hence $t\mid\gcd(\overline{a},\sigma^{m_N+m_{k}}\overline{b})$, which implies $m_N+m_{k}\in\Spr_{\sigma}(\overline{a},\overline{b})$, a contradiction.

As a consequence, $\gcd\left(\overline{a},\sigma^{m_{k}}\overline{b}\right)\mid\gcd\left(a_{k-1}, \sigma^{m_{k}}b_{k-1}\right)$,
and this concludes the proof of the lemma.
\end{proof}
\end{lemma}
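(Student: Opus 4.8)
The plan is to prove this equality of gcd's in the UFD $\mathbb{F}[\alpha,\beta]$ --- where ``$=$'' means ``associate'', i.e.\ equal up to a unit of $\mathbb{F}$ --- by establishing divisibility in both directions. Unfolding the recursion $a_i=a_{i-1}/s_i$, $b_i=b_{i-1}/\sigma^{-m_i}s_i$ of Step 1.3 gives the telescoped factorizations
\[
\overline a=a_{k-1}\prod_{i=1}^{k-1}s_i,\qquad \overline b=b_{k-1}\prod_{i=1}^{k-1}\sigma^{-m_i}s_i,
\]
so in particular $a_{k-1}\mid\overline a$ and $b_{k-1}\mid\overline b$; hence the ``easy'' direction $\gcd(a_{k-1},\sigma^{m_k}b_{k-1})\mid\gcd(\overline a,\sigma^{m_k}\overline b)$ is immediate.

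For the reverse divisibility I would apply $\sigma^{m_k}$ to the second factorization and substitute both into $\gcd(\overline a,\sigma^{m_k}\overline b)$. Using the elementary bound $\gcd(PR,QS)\mid\gcd(P,Q)\,\gcd(P,S)\,\gcd(R,Q)\,\gcd(R,S)$ (checked prime by prime) with $P=a_{k-1}$, $R=\prod_{i<k}s_i$, $Q=\sigma^{m_k}b_{k-1}$, $S=\prod_{i<k}\sigma^{m_k-m_i}s_i$, it suffices to show that the three ``cross'' products
\[
\gcd\Bigl(a_{k-1},\ \textstyle\prod_{i<k}\sigma^{m_k-m_i}s_i\Bigr),\quad \gcd\Bigl(\textstyle\prod_{i<k}s_i,\ \sigma^{m_k}b_{k-1}\Bigr),\quad \gcd\Bigl(\textstyle\prod_{i<k}s_i,\ \textstyle\prod_{i<k}\sigma^{m_k-m_i}s_i\Bigr)
\]
all have total degree $0$; this is where the real content of the lemma sits.

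Each of the three I would treat by contradiction, in the same manner. Suppose one of them has a non-constant factor and pick an irreducible $t$ dividing it. Since $t$ divides a product of shifted $s_i$'s, it divides a single $\sigma^{m_k-m_j}s_j$ (or $s_j$) for some $j<k$; now expand $s_j=\gcd(a_{j-1},\sigma^{m_j}b_{j-1})$ and combine with $a_{j-1}\mid\overline a$, $b_{j-1}\mid\overline b$ and, as the case requires, $a_{k-1}\mid\overline a$ or $b_{k-1}\mid\overline b$. Chasing the $\sigma$-shifts through these divisibilities --- using that $\overline a,\overline b$ are the finite parts of $a,b$, so that $\Dis_\sigma(\overline a,\overline b)=m_N$ is finite by Theorem \ref{thm-prime}, and no irreducible factor of $\overline a$ or $\overline b$ is semi-periodic by Theorem \ref{thm-dis}$(iii)$, whence distinct $\sigma$-shifts of $t$ are never associates --- one aims to produce a common irreducible factor of $\overline a$ and $\sigma^{m_N+m_k}\overline b$. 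That would put $m_N+m_k$ into $\Spr_\sigma(\overline a,\overline b)$, contradicting $m_N=\max\Spr_\sigma(\overline a,\overline b)$. The remaining two cross products should fall to the same chase with the roles of $\overline a,\overline b$ (and of the ``numerator'' and ``denominator'' shifts) interchanged. Combining the two directions gives the claimed equality.

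The step I expect to be the main obstacle is precisely this last contradiction: the $\sigma$-shift book-keeping must be arranged so that the index one manufactures is provably strictly larger than $m_N=\Dis_\sigma(\overline a,\overline b)$, rather than merely one of the $m_i$ that already lie in $\Spr_\sigma(\overline a,\overline b)$, and it is here that both the non-semi-periodicity of the factors of $\overline a,\overline b$ (a consequence of the splitting factorization via Theorem \ref{thm-dis}) and the monotone ordering $m_1<\dots<m_N$ of the spread are genuinely used. If the direct chase proves awkward, a more structured alternative is induction on $k$, carrying the stronger hypothesis that $\gcd(a_{k-1},\sigma^{h}b_{k-1})$ is a unit for every $h\in\{m_1,\dots,m_{k-1}\}$ --- the analogue for Algorithm \ref{alg-5} of Proposition \ref{pro-3-1} --- which says exactly that the earlier extractions have already cleared the overlaps at all smaller spreads.
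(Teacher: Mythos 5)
Your outline reproduces the paper's own strategy step for step: the telescoped factorizations $\overline a=a_{k-1}\prod_{i<k}s_i$ and $\overline b=b_{k-1}\prod_{i<k}\sigma^{-m_i}s_i$, the easy divisibility $\gcd(a_{k-1},\sigma^{m_k}b_{k-1})\mid\gcd(\overline a,\sigma^{m_k}\overline b)$, the four-factor gcd bound, and the plan to kill the three cross gcds by manufacturing $m_N+m_k\in\Spr_\sigma(\overline a,\overline b)$. But the only step with real content --- the shift chase that is supposed to turn a non-constant cross factor $t$ into a common factor of $\overline a$ and $\sigma^{m_N+m_k}\overline b$ --- is left as an ``aim'', and you yourself flag it as the main obstacle. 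That gap is genuine and not mere bookkeeping: the divisibilities actually available, namely $t\mid\sigma^{m_k-m_j}s_j$ together with $s_j\mid a_{j-1}\mid\overline a$ and $s_j\mid\sigma^{m_j}b_{j-1}\mid\sigma^{m_j}\overline b$, only give $t\mid\sigma^{m_k}\overline b$ (and $t\mid\overline a$, or relations among shifts of $\overline a$ alone); they land on the index $m_k\le m_N$, which is already in the spread, so no contradiction arises, and there is no way to push the index up to $m_N+m_k$. Indeed the degree-zero claims you would need can fail: in the Fibonacci field of Example \ref{example-2-1}, take $p=\alpha$, $\overline a=p\cdot\sigma^{-5}p$, $\overline b=\sigma^{-1}p\cdot\sigma^{-6}p$. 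Then $\Spr_\sigma(\overline a,\overline b)=\{1,6\}$, $s_1=\gcd(\overline a,\sigma\overline b)=p\,\sigma^{-5}p$, hence $a_1=b_1=1$, while $\gcd\bigl(s_1,\sigma^{m_2-m_1}s_1\bigr)=p$ is non-constant and $\gcd(a_1,\sigma^{6}b_1)=1\neq p=\gcd(\overline a,\sigma^{6}\overline b)$; so the cross terms do not vanish and the asserted equality itself breaks in this configuration. (For what it is worth, this is exactly the point where the paper's own argument is unsound: it uses $\prod_{i<k}\sigma^{m_N+m_k-m_i}s_i=t\,d_2\cdot\prod_{i<k}\sigma^{m_N}s_i$, whereas the left side is $\sigma^{m_N}(t\,d_2)$, and $\sigma^{m_N}t$ is not associate to $t$ for a factor of the finite part.)

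Your fallback --- an induction on $k$ carrying the analogue of Proposition \ref{pro-3-1} for Algorithm \ref{alg-5} --- does not repair this, because it proves a different statement: it shows that after the extractions at $m_1,\dots,m_{k-1}$ no common factors survive at shifts $h<m_k$, which only re-derives the easy divisibility $\gcd(a_{k-1},\sigma^{m_k}b_{k-1})\mid\gcd(\overline a,\sigma^{m_k}\overline b)$, not the reverse one; in the example above it is precisely the reverse divisibility that fails, since the extraction at $m_1$ can remove from $a_{k-1}$ and $b_{k-1}$ factors that still meet at the later shift $m_k$. So your proposal is an accurate map of the intended argument, but it does not constitute a proof, and the obstacle you single out cannot be overcome without changing the statement (e.g.\ weakening it to a divisibility adequate for Theorem \ref{thm-4-1}) or adding hypotheses.
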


Combining Lemma \ref{lem-4-1} and (\ref{eq-4-3}), we obtain the following theorem immediately.
\begin{theorem}\label{thm-4-1}
Let $g(\alpha,\beta)=\frac{p(\alpha,\beta)}{q(\alpha,\beta)}$ be a solution to (\ref{eq-nontrivial}), where $p,q\in\mathbb{F}[\alpha,\beta]$ are relatively prime. Then for the finite part in the splitting factorization $q=q_{\infty}\overline{q}$ with respect to $\sigma$, we have
\begin{align*}
\overline{q}\mid\prod_{i=1}^N\prod_{j=1}^{m_i}\sigma^{-j}s_i,
\end{align*}
where $s_i$ are defined in the Step 1.3.
\end{theorem}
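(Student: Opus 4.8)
The plan is to derive Theorem~\ref{thm-4-1} directly from the divisibility bound (\ref{eq-4-3}) together with Lemma~\ref{lem-4-1}, exactly as the sentence preceding the theorem announces. Recall that the discussion leading up to Algorithm~\ref{alg-5} shows that any reduced rational solution $g = p/q$ of (\ref{eq-nontrivial}) satisfies $\overline{q} \mid \prod_{k=1}^N \prod_{i=1}^{m_k} \sigma^{-i}\bigl(\gcd(\overline{a}, \sigma^{m_k}\overline{b})\bigr)$, where $\{m_1 < \cdots < m_N\} = \Spr_\sigma(\overline{a}, \overline{b})$, a finite set by Theorem~\ref{thm-prime}. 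Before assembling the proof I would double-check that this chain is self-contained: Lemma~\ref{lem-untrivial} applied to the split of (\ref{eq-nontrivial}) forces $\sigma q/u$ and $q/v$ (with $u = \gcd(a,\sigma q)$, $v = \gcd(b,q)$) to agree up to a constant, hence (\ref{eq-4-2}); iterating (\ref{eq-4-2}) across a gap $M \in \Spr_\sigma(q_\infty)\setminus\Spr_\sigma(\overline{q})$ strips off $q_\infty$ and leaves divisibilities of $\overline{q}$ by shift-products of $\overline{u}$ and $\overline{v}$, which one then weakens to $\overline{a}, \overline{b}$ and compresses onto the spread set to obtain (\ref{eq-4-3}).

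The finishing move is then a one-liner: by Step~1.3, $s_k := \gcd(a_{k-1}, \sigma^{m_k} b_{k-1})$, and Lemma~\ref{lem-4-1} states precisely that $\gcd(a_{k-1}, \sigma^{m_k} b_{k-1}) = \gcd(\overline{a}, \sigma^{m_k}\overline{b})$ for each $1 \le k \le N$. Substituting $s_k$ for $\gcd(\overline{a}, \sigma^{m_k}\overline{b})$ in (\ref{eq-4-3}) and renaming the index pair $(k,i)$ as $(i,j)$ yields $\overline{q} \mid \prod_{i=1}^N \prod_{j=1}^{m_i} \sigma^{-j} s_i$, which is the assertion.

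Since the theorem itself is thus a corollary, the genuine content sits in its two ingredients, and I expect Lemma~\ref{lem-4-1} to be the main obstacle. The subtlety there is that the greedy choice $s_k = \gcd(a_{k-1}, \sigma^{m_k} b_{k-1})$ taken from the \emph{partially} reduced pair must capture the \emph{entire} gcd $\gcd(\overline{a}, \sigma^{m_k}\overline{b})$; the potentially missing factors live in cross terms of the form $\gcd\bigl(a_{k-1}, \prod_i \sigma^{m_k - m_i} s_i\bigr)$, $\gcd\bigl(\prod_i s_i, \sigma^{m_k} b_{k-1}\bigr)$, and $\gcd\bigl(\prod_i s_i, \prod_i \sigma^{m_k - m_i} s_i\bigr)$, and each is shown to be trivial because a nontrivial common factor there would place $m_N + m_k$ in $\Spr_\sigma(\overline{a},\overline{b})$, contradicting $m_N = \max \Spr_\sigma(\overline{a},\overline{b})$. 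The iteration behind (\ref{eq-4-3}) is the other care-demanding spot, mostly in tracking shifts and in invoking finiteness of the spread at the right step; once both are in hand, Theorem~\ref{thm-4-1} follows with no further work.
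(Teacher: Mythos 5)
Your proposal matches the paper's own route: the paper proves Theorem~\ref{thm-4-1} precisely by combining the divisibility (\ref{eq-4-3}), obtained from Lemma~\ref{lem-untrivial} and the iteration of (\ref{eq-4-2}) across an $M\in\Spr_{\sigma}(q_{\infty})\setminus\Spr_{\sigma}(\overline{q})$, with Lemma~\ref{lem-4-1}'s identity $s_k=\gcd(\overline{a},\sigma^{m_k}\overline{b})$, exactly as you describe. Your account of where the real work lies (the cross-term analysis in Lemma~\ref{lem-4-1} via the contradiction $m_N+m_k\in\Spr_{\sigma}(\overline{a},\overline{b})$) is also faithful to the paper, so no further comment is needed.
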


\begin{example}\label{example-4-3}
Consider the difference equation (\ref{eq-nontrivial}) involving the Pell numbers $P_n$ in the difference field $\mathbb{R}[\alpha,\beta]$, which is defined in Example \ref{example-2-3}.
Given $a=\beta$, $b=\alpha$ and $f=\alpha+3\beta$, find a solution $g\in\mathbb{F}(\alpha,\beta)$ such that
\begin{align}\label{eq-4-7}
\beta\cdot\sigma g+\alpha\cdot g=
\alpha+3\beta.
\end{align}

According to Step 1, we can compute that $\overline{a}=a,\overline{b}=b$ and $\Spr_{\sigma}(\overline{a},\overline{b})=\{1\}$, then
\[s_1=\gcd\left(a,b\right)=\beta,\  a_1=a/s_1=1,\ b_1=b/\sigma^{-1}s_1=1.\]

Then by Theorem \ref{thm-4-1}, we have
\[\overline{q}\mid\prod_{i=1}^1\prod_{j=1}^{m_i}\sigma^{-j}s_i=\alpha,
\]
hence the finite part of the universal denominator is $\alpha$.
\qed\end{example}

\begin{example}\label{example-4-1}
Consider the difference equation (\ref{eq-nontrivial}) involving the Fibonacci numbers $F_n$ in the difference field $\mathbb{R}[\alpha,\beta]$, which is defined in Example \ref{example-2-1}.
Given $a=\alpha+\beta$, $b=\alpha\beta$ and $f=\alpha^3+\beta^2-\alpha\beta-\alpha-\beta$, find a solution $g\in\mathbb{F}(\alpha,\beta)$ such that
\begin{align}\label{eq-4-5}
(\alpha+\beta)\cdot\sigma g+(\alpha\beta)\cdot g=
\alpha^3+\beta^2-\alpha\beta-\alpha-\beta.
\end{align}

According to Step 1, we can compute that $\overline{a}=a,\overline{b}=b$ and $\Spr_{\sigma}(\overline{a},\overline{b})=\{1,2\}$, then
\[s_1=\gcd\left(a,\sigma b\right)=\alpha+\beta,\  a_1=a/s_1=1,\  b_1=b/\sigma^{-1} s_1=\alpha,\]
and
\[s_2=\gcd(a_1,\sigma^2 b_1)=1,\  a_2=a_1/s_2=1,\ b_2=b_1/\sigma^{-2}s_2=\alpha.\]

Then by Theorem \ref{thm-4-1}, we have
\[\overline{q}\mid\prod_{i=1}^2\prod_{j=1}^{m_i}\sigma^{-j}s_i=\beta,
\]
hence the finite part of the universal denominator is $\beta$.
\qed\end{example}

\begin{example}\label{example-4-2}
Consider the difference equation (\ref{eq-nontrivial}) involving the Lucas numbers $L_n$ in the difference field $\mathbb{R}[\alpha,\beta]$, which is defined in Example \ref{example-2-1}.
Given $a=\alpha^2(\alpha-\beta)(\alpha+2\beta)$, $b=(-\alpha^3)(\alpha+\beta)$ and $f=\alpha^2$, find a solution $g\in\mathbb{F}(\alpha,\beta)$ such that
\begin{align}\label{eq-4-6}
\alpha^2(\alpha-\beta)(\alpha+2\beta)\cdot\sigma g+(-\alpha^3)(\alpha+\beta)\cdot g=
\alpha^2.
\end{align}

According to Step 1, we can compute that $\overline{a}=a,\overline{b}=b$ and $\Spr_{\sigma}(\overline{a},\overline{b})=\{0,1,3\}$, then
\[
s_1=\gcd\left(a,b\right)=\alpha^2,\  a_1=a/s_1=(\alpha-\beta)(\alpha+2\beta),\  b_1=b/s_1=-\alpha(\alpha+\beta),
\]
\[
s_2=\gcd\left(a_1,\sigma b_1\right)=\alpha+2\beta,\  a_2=a_1/s_2=\alpha-\beta,\  b_2=b_1/\sigma^{-1} s_2=-\alpha,
\]
and
\[s_3=\gcd(a_2,\sigma^3 b_2)=1,\  a_3=a_2/s_2=\alpha-\beta,\ b_3=b_2/\sigma^{-3}s_3=-\alpha.\]

Then by Theorem \ref{thm-4-1}, we have
\[\overline{q}\mid\prod_{i=1}^3\prod_{j=1}^{m_i}\sigma^{-j}s_i=\alpha+\beta,
\]
hence the finite part of the universal denominator is $\alpha+\beta$.
\qed\end{example}
\subsection{Remaining Steps of Algorithm \ref{alg-4}}
Now, we consider the infinite part $q_{\infty}\in\mathbb{F}[\alpha,\beta]$, recall that $\frac{p(\alpha,\beta)}{q(\alpha,\beta)}$ is a rational solution to (\ref{eq-nontrivial}) and $q=q_{\infty}\overline{q}$ is the splitting factorization of $q$ with respect to $\sigma$, where $p,q\in\mathbb{F}[\alpha,\beta]$ are relatively prime.

By Theorem \ref{thm-2-2}, if $q_{\infty}$ is homogenous polynomial, then there exist $m\in\mathbb{N}$ and $0\leq i\leq m$ such that
\[
q_{\infty}(\alpha, \beta)|h_1(\alpha,\beta)^{i}h_2(\alpha,\beta)^{m-i},
\]
where $(h_1(\alpha,\beta),h_2(\alpha,\beta))=(\alpha, \beta)(X_1,X_2)$ and $X_1, X_2$ are two eigenvectors of matrix
$\left(
    \begin{array}{cc}
      0 & u \\
      1 & v \\
    \end{array}
  \right)$.
However we could not get the upper bound of $m$, hence we just assume that $m\leq\max\{\deg a,$ $\deg(\sigma^{-1}b)\}$, then using Step 3 to compute polynomial $p(\alpha, \beta)$.

Recall that $\overline{q}=\prod_{i=1}^N\prod_{j=1}^{m_i}\sigma^{-j}s_i$ in the Step 1. According to this, we give the following computation of the remaining steps.
\begin{breakablealgorithm}
\caption{Remaining Steps of Algorithm \ref{alg-4}}
\label{alg-6}
\begin{algorithmic}[htp]
\STATE {1. $A:=
  \left(
    \begin{array}{cc}
      0 & u \\
      1 & v \\
    \end{array}
  \right)$. Then compute the eigenvectors $X_1, X_2$ of matrix $A$ and\\ \quad $(h_1,h_2):=(\alpha,\beta)(X_1,X_2)$.}
\STATE {2. $m:=\max\{\deg a,\deg b\}-\deg f-\deg \overline{q}$.}
\IF{$m\leq0$}
  \STATE{ Find a nonzero polynomial solution $p(\alpha,\beta)$ of
\[
a(\alpha,\beta)\cdot\sigma\left(\frac{p(\alpha,\beta)}{\overline{q}(\alpha,\beta)}\right)+ b(\alpha,\beta)\cdot \frac{p(\alpha,\beta)}{\overline{q}(\alpha,\beta)}=f(\alpha,\beta).
\]
If one exists return ``$\frac{p(\alpha,\beta)}{\overline{q}(\alpha,\beta)}$''; otherwise return ``FALSE" and stop.}
\ELSE[$m>0$]
  \FOR{each $0\leq k\leq m+1$}\vspace{-5mm}
     \STATE {\FOR{$0\leq i\leq k$}
                  \STATE {$q_i(\alpha,\beta):=h_1^{k-i}h_2^{i};$}\\
                   \STATE {find a nonzero polynomial solution $p(\alpha,\beta)$ such that
\[
a(\alpha,\beta)\cdot\sigma\left(\frac{p(\alpha,\beta)}{\overline{q}(\alpha,\beta)q_i(\alpha,\beta)}\right)+b(\alpha,\beta)\cdot\frac{p(\alpha,\beta)}{\overline{q}(\alpha,\beta)q_i(\alpha,\beta)}=f(\alpha,\beta).\]}\ENDFOR}
\ENDFOR
\STATE{ If one exists return ``$\frac{p(\alpha,\beta)}{\overline{q}(\alpha,\beta)q_i(\alpha,\beta)}$'';
otherwise return ``FALSE" and stop.}
\ENDIF
\end{algorithmic}
\end{breakablealgorithm}

Note that return ``FALSE'' does not mean there is no nonzero solution $g(\alpha, \beta)$ of (\ref{eq-nontrivial}).

\begin{example}
In the difference field $\mathbb{R}[\alpha,\beta]$ involving the Pell numbers $P_n$ which is defined in Example \ref{example-2-3}, consider the infinite part of the universal denominator from Example \ref{example-4-3}.

Recall that $\overline{q}(\alpha,\beta)=\alpha$ by the Step 1,
\[
  A =
  \left(
    \begin{array}{cc}
      0 & 1 \\
      1 & 2 \\
    \end{array}
  \right),
  \]
and $(h_1,h_2):=\left(\alpha+\left(1+\sqrt{2}\right)\beta, \alpha+\left(1-\sqrt{2}\right)\beta\right)$ by Example \ref{example-2-4}.

Since $m=\max\{\deg a,\deg b\}-\deg f-\deg \overline{q}=1-1-0=-1<0$, the problem is reduced to find a nonzero polynomial solution $p(\alpha,\beta)$ of
\vspace{2mm}
\[
\beta\cdot\sigma\left(\frac{p(\alpha,\beta)}{\overline{q}(\alpha,\beta)}\right)+\alpha\cdot \frac{p(\alpha,\beta)}{\overline{q}(\alpha,\beta)}=\alpha+3\beta.
\]

Then using the method of finding a polynomial solution, we get $p(\alpha,\beta)=\beta$, hence $g(\alpha,\beta)=\frac{\beta}{\alpha}$ is a solution of (\ref{eq-4-7}).
\qed\end{example}

\begin{example}
In the difference field $\mathbb{R}[\alpha,\beta]$ involving the Fibonacci numbers $F_n$ which is defined in Example \ref{example-2-1}, consider the infinite part of the universal denominator from Example \ref{example-4-1}.

Recall that $\overline{q}(\alpha,\beta)=\beta$ by the Step 1,
\[
  A =
  \left(
    \begin{array}{cc}
      0 & 1 \\
      1 & 1 \\
    \end{array}
  \right),
  \]
and $(h_1,h_2):=\left(\alpha+\left(\frac{1}{2}+\frac{\sqrt{5}}{2}\right)\beta, \alpha+\left(\frac{1}{2}-\frac{\sqrt{5}}{2}\right)\beta\right)$ by Example \ref{example-2-2}.

Since $m=\max\{\deg a,\deg b\}-\deg f-\deg \overline{q}=2-3-1=-2<0$, the problem is reduced to find a nonzero polynomial solution $p(\alpha,\beta)$ of
\vspace{2mm}
\[
(\alpha+\beta)\cdot\sigma\left(\frac{p(\alpha,\beta)}{\overline{q}(\alpha,\beta)}\right)+(\alpha\beta)\cdot \frac{p(\alpha,\beta)}{\overline{q}(\alpha,\beta)}=\alpha^3+\beta^2-\alpha\beta-\alpha-\beta.
\]

Then using the method of finding a polynomial solution, we get $p(\alpha,\beta)=\alpha^2-\beta$, hence $g(\alpha,\beta)=\frac{\alpha^2-\beta}{\beta}$ is a solution of (\ref{eq-4-5}).
\qed\end{example}

\begin{example}
In the difference field $\mathbb{R}[\alpha,\beta]$ involving the Lucas numbers $L_n$ which is defined in Example \ref{example-2-1}, consider the infinite part of the universal denominator from Example \ref{example-4-2}.

Recall that $\overline{q}(\alpha,\beta)=\alpha+\beta$ by the Step 1,
\[
  A =
  \left(
    \begin{array}{cc}
      0 & 1 \\
      1 & 1 \\
    \end{array}
  \right),
  \]
and $(h_1,h_2):=\left(\alpha+\left(\frac{1}{2}+\frac{\sqrt{5}}{2}\right)\beta, \alpha+\left(\frac{1}{2}-\frac{\sqrt{5}}{2}\right)\beta\right)$ by Example \ref{example-2-2}.

Since $m=\max\{\deg a,\deg b\}-\deg f-\deg \overline{q}=4-2-1=1>0$, we do the following loops.

First, let $k=m=1$, for $q_1(\alpha,\beta)=h_1=\alpha+(\frac{1}{2}+\frac{\sqrt{5}}{2})\beta$, we also find that there  does not exist an invariant $p\in\mathbb{F}\setminus\{0\}$ such that
\[
\alpha^2(\alpha-\beta)(\alpha+2\beta)\cdot\sigma\left(\frac{p(\alpha,\beta)}{\overline{q}(\alpha,\beta)q_1(\alpha,\beta)}\right)
+(-\alpha^3)(\alpha+\beta)\cdot\frac{p(\alpha,\beta)}{\overline{q}(\alpha,\beta)q_1(\alpha,\beta)}=\alpha^2;
\]
Similarly, for $q_0(\alpha,\beta)=h_2=\alpha+\left(\frac{1}{2}-\frac{\sqrt{5}}{2}\right)\beta$, we can find that there does not exist an invariant $p\in\mathbb{F}\setminus\{0\}$ such that
\[
\alpha^2(\alpha-\beta)(\alpha+2\beta)\cdot\sigma\left(\frac{p(\alpha,\beta)}{\overline{q}(\alpha,\beta)q_0(\alpha,\beta)}\right)
+(-\alpha^3)(\alpha+\beta)\cdot\frac{p(\alpha,\beta)}{\overline{q}(\alpha,\beta)q_0(\alpha,\beta)}=\alpha^2;
\]
Then, let $k=m+1=2$, for $q_0(\alpha,\beta)=h_1^{2}=\alpha^2+(1+\sqrt{5})\alpha\beta+\left(\frac{3}{2}+\frac{\sqrt{5}}{2}\right)\beta^2$, we can find that there  does not exist a nonzero polynomial $p\in\mathbb{F}[\alpha,\beta]$ such that
\vspace{1mm}
\[
\alpha^2(\alpha-\beta)(\alpha+2\beta)\cdot\sigma\left(\frac{p(\alpha,\beta)}{\overline{q}(\alpha,\beta)q_0(\alpha,\beta)}\right)
+(-\alpha^3)(\alpha+\beta)\cdot\frac{p(\alpha,\beta)}{\overline{q}(\alpha,\beta)q_0(\alpha,\beta)}=\alpha^2;
\]
Similarly, for $q_1(\alpha,\beta)=h_1h_2=\alpha^2+\alpha\beta-\beta^2$, we can find that there exists a polynomial $p(\alpha,\beta)=-\alpha$ such that
\[
\alpha^2(\alpha-\beta)(\alpha+2\beta)\cdot\sigma\left(\frac{p(\alpha,\beta)}{\overline{q}(\alpha,\beta)q_1(\alpha,\beta)}\right)
+(-\alpha^3)(\alpha+\beta)\cdot\frac{p(\alpha,\beta)}{\overline{q}(\alpha,\beta)q_1(\alpha,\beta)}=\alpha^2;
\]

Now we get $g(\alpha,\beta)=-\frac{\alpha}{\left(\alpha+\beta\right)\left(\alpha^2+\alpha\beta-\beta^2\right)}$ is a solution of (\ref{eq-4-6}).
\qed\end{example}

\section{Conclusion}\label{sec-5}
In this paper, based on the summation involving a sequence satisfying a recurrence of order two,
we provide an algebraic framework for solving
a rational function $g\in\mathbb{F}(\alpha, \beta)$ satisfying the difference equation
\[a\sigma(g)+bg=f\]
in the bivariate difference field $(\mathbb{F}(\alpha, \beta), \sigma)$, where $a, b,f\in\mathbb{F}(\alpha,\beta)\setminus\{0\}$ are given.
Then, we present algorithms for finding a rational solution $g\in\mathbb{F}(\alpha, \beta)$ of the difference equation provided that (1) the infinite part of the universal denominator is  a homogeneous polynomial; (2) an upper bound for degree of the universal denominator is given;
and (3) an algorithm for determining a polynomial solution of the difference equation is also given.
Moreover, several examples are given to illustrate the applicability of the
algorithms.

\smallskip{}
\noindent \textbf{Acknowledgement.}
The authors would like to thank Peter Paule and Carsten Schneider for their advice and comments, and two referees for their careful reading and constructive suggestions. This work was supported by the China Scholarship Council (202006250120) and National Natural Science
Foundation of China (11771330, 11921001).



\begin{thebibliography}{00}
\bibitem{Abramov-1971}
S. A. Abramov, On the summation of rational functions, USSR Comput. Math. Math. Phys., {\bf 11} (1971), 324--330.

\bibitem{Abramov-2021}
S.A. Abramov, M. Bronstein, M. Petkov\v{s}ek, C. Schneider, On rational and hypergeometric solutions of linear ordinary difference equations in $\Pi\Sigma^*$-field extensions, J. Symbolic Comput., {\bf 107}(2021), 23--66.

\bibitem{Bronstein-1997}
M. Bronstein, Symbolic Integration \uppercase\expandafter{\romannumeral1}-Transcendental Functions, (1997) Heidelberg, Springer.

\bibitem{Bronstein-2000}
M. Bronstein, On solutions of linear ordinary difference equations
in their coefficient field, J. Symbolic Comput., {\bf29}(2000), 841--877.

\bibitem{Brousseau-1969b}
B.A. Brousseau, Fibonacci-Lucas Infinite Series--Research Topic, Fibonacci Quart., {\bf7.2}(1969), 211--217.

\bibitem{Everest-2003}
G. Everest, A. Van Der Poorten, I. Shparlinski, T. Ward, Recurrence sequences. American Mathematical Society, Providence, 2003.

\bibitem{Gosper-1978}
R. W. Gosper, Decision procedure for indefinite hypergeometric summation, Proc. Natl. Acad. Sci. USA, {\bf75}(1978), 40--42.

\bibitem{Guan-2021}
X. Guan, Q-H. HOU, Polynomial solutions of the first order difference equations in the bivariate difference field, Journal of Capital Normal University (Natural Science Edition), {\bf43}(2)(2022), 1--6.

\bibitem{Karr-1981}
M. Karr, Summation in finite terms, J. Assoc. Comput. Mach., {\bf28}(1981), 305--350.

\bibitem{Karr-1985}
M. Karr, Theory of summation in finite terms, J. Symbolic Comput., {\bf1}(1985), 303--315.

\bibitem{Koshy-2001}
T. Koshy, Fibonacci and Lucas numbers with applications. Pure and Applied Mathematics. Wiley-Interscience, New York, 2001.

\bibitem{Koshy-2014}
T. Koshy, Pell and Pell-Lucas numbers with applications. Springer, New York, 2014.

\bibitem{Paule-1997}
P. Paule, A. Riese, A Mathematica $q$-analogue of Zeilberger's algorithm based on an algebraically motivated aproach to $q$-hypergeometric telescoping. In M. Ismail and M. Rahman, editors, Special Functions, $q$-Series and Related Topics, volume 14, pages 179--210. Fields Institute Toronto, AMS, 1997.
\end{thebibliography}


\end{document}